% SIAM Article Template
\documentclass[onefignum,onetabnum]{siamart190516}

% Information that is shared between the article and the supplement
% (title and author information, macros, packages, etc.) goes into
% ex_shared.tex. If there is no supplement, this file can be included
% directly.

% SIAM Shared Information Template
% This is information that is shared between the main document and any
% supplement. If no supplement is required, then this information can
% be included directly in the main document.

% Packages and macros go here
\usepackage{lipsum}
\usepackage{amsfonts}
\usepackage{graphicx}
\usepackage{epstopdf}
\usepackage{algorithmic}
\ifpdf
  \DeclareGraphicsExtensions{.eps,.pdf,.png,.jpg}
\else
  \DeclareGraphicsExtensions{.eps}
\fi

% Add a serial/Oxford comma by default.

% Used for creating new theorem and remark environments
\newsiamremark{remark}{Remark}
\newsiamremark{hypothesis}{Hypothesis}
\crefname{hypothesis}{Hypothesis}{Hypotheses}
\newsiamthm{claim}{Claim}
\newsiamthm{assumption}{Assumption}

% Sets running headers as well as PDF title and authors
\headers{Uniform  approximation of 2$d$ Navier-Stokes equation}{F. Flandoli, C. Olivera, and M. Simon}

% Title. If the supplement option is on, then "Supplementary Material"
% is automatically inserted before the title.
\title{Uniform  approximation of 2$d$ Navier-Stokes equation  by stochastic interacting particle systems\thanks{\funding{ C. O.~is partially supported by FAPESP by the grant 2018/15258-7	 and by CNPq
		by the grant 426747/2018-6. This project has received funding from the CNRS-FAPESP cooperation, grant n\textsuperscript{o}PRC2726. M.S.~also thanks Labex CEMPI (ANR-11-LABX-0007-01), and the ANR grant MICMOV (ANR-19-CE40-0012) of the French National Research Agency (ANR), and finally  the European Research Council (ERC) under  the European Union's Horizon 2020 research and innovative program (grant agreement  n\textsuperscript{o}715734). }}}

% Authors: full names plus addresses.
\author{Franco Flandoli\thanks{Scuola Normale Superiore, Pisa,
Italy
  (\email{franco.flandoli@sns.it}).}
\and Christian
Olivera\thanks{Departamento de Matem\'{a}tica, Universidade Estadual de
Campinas UNICAMP, Brazil 
  (\email{colivera@ime.unicamp.br}).}
\and Marielle Simon\thanks{Inria, Univ. Lille, CNRS, UMR 8524 - Laboratoire Paul Painlev\'e, F-59000 Lille, France (\email{marielle.simon@inria.fr})}}

\usepackage{amsopn}
\usepackage{amssymb}
\usepackage{mathabx}

\newcommand{\RR}{\mathbb{R}}
\newcommand{\sca}{\,\begin{picture}(-1,1)(-1,-3)\circle*{2.5}\end{picture}\ }
\newcommand{\LL}{\mathbb{L}}
\newcommand{\NN}{\mathbb{N}}
\newcommand{\HH}{\mathbb{H}}
\newcommand{\PP}{\mathbb{P}}
\newcommand{\EE}{\mathbb{E}}

\newcommand{\mm}[1]{{#1}}

%%% Local Variables: 
%%% mode:latex
%%% TeX-master: "ex_article"
%%% End: 

% Optional PDF information
\ifpdf
\hypersetup{
  pdftitle={},
  pdfauthor={}
}
\fi

% The next statement enables references to information in the
% supplement. See the xr-hyperref package for details.

%\externaldocument{ex_supplement}

% FundRef data to be entered by SIAM
%<funding-group specific-use="FundRef">
%<award-group>
%<funding-source>
%<named-content content-type="funder-name"> 
%</named-content> 
%<named-content content-type="funder-identifier"> 
%</named-content>
%</funding-source>
%<award-id> </award-id>
%</award-group>
%</funding-group>

\begin{document}

\maketitle

% REQUIRED
\begin{abstract}
We consider an interacting particle system  modeled as a system of  $N$ stochastic differential equations driven by Brownian motions. We prove that the (mollified) empirical process converges, uniformly in time and  space variables, to the solution of the two-dimensional Navier-Stokes equation written in vorticity form. 
 The  proofs  follow a semigroup  approach.  
\end{abstract}

% REQUIRED
\begin{keywords}
  Moderately interacting particle system, Stochastic  differential
equations, $2d$ Navier-Stokes equation, Vorticity equation, Analytic semigroup.
\end{keywords}

% REQUIRED
\begin{AMS}60H20, 60H10, 60F99.
\end{AMS}

\section{Introduction}

The  main goal of this paper is to provide a stochastic particle approximation of the two-dimensional Navier-Stokes equation. Precisely,  we consider the following classical Cauchy problem which describes the evolution of the velocity field
$u:\RR_+ \times \RR^2 \to \RR^2$ of an incompressible fluid with kinematic
viscosity coefficient $\nu >0$: for any $(t,x) \in \RR_+ \times \RR^2 $,
\begin{equation}\label{Navier}
 \left \{
\begin{aligned}
    \partial_t u(t, x) &=\nu  \Delta u(t, x) - \big[u(t, x) \sca \nabla\big] u(t, x)  - \nabla p(t, x)
    \\\text{ div } u(t, x)&=0  \vphantom{ u^{\rm ini}}
    \\    u(0,x)&=  u^{\rm ini}(x),
\end{aligned}
\right .
\end{equation}
 where: $\sca$ denotes the standard Euclidean product in $\RR^2$ ;
the unknown quantities are the velocity $u(t, x) = (u_{1}(t, x), u_{2}(t, x)) \in \RR^2$ of
the fluid element at time $t$ and position $x$ and the pressure $p(t, x)\in\RR$. Such equations are attracting the attention of
a large scientific community, with a large amount of publications in the literature. Since this system is very famous, we do not comment here on its derivation and rather refer to the monographs \cite{Soh} and \cite{Tema}. For recent developments, see  also  \cite{Lema}. 

The associated (scalar) \textit{vorticity field} $ \xi= \partial_{1}u_{2} - \partial_{2}u_{1}:  \RR^2 \to \RR $ 
satisfies a remarkably simple equation of \textit{convection-diffusion propagation}, namely
\begin{equation}\label{Vorty}
\partial_{t} \xi  + u \sca \nabla \xi=\nu \Delta \xi, \qquad x\in \mathbb{R}^{2},  t>0.
\end{equation}
The velocity field $u(t, x)$ can be reconstructed from the vorticity distribution $\xi(t, x)$ by the
convolution with the \textit{Biot-Savart kernel} $K$ as:
\begin{equation}
u(t,x)= \big(K \ast \xi(t,\cdot)\big)(x)= \frac{1}{2\pi} \int_{\mathbb{R}^{2}} \frac{(x-y)^{\bot}}{|x-y|^{2}} \ \xi(t,y) dy 
\end{equation}
where $(x_{1},x_{2})^{\bot}:=(-x_{2}, x_{1})$. It is well-known (see \cite[Lemma 1.1]{Mela2}) that there is a constant $c_K >0$ such that: for any $\xi \in \LL^1(\RR^2)\cap \LL^\infty(\RR^2)$ 
\begin{equation}
\label{eq:ck}
\|K \ast \xi \|_{\LL^\infty} \leq c_K \big( \| \xi \|_{\LL^1} +\|\xi\|_{\LL^\infty}\big),
\end{equation} where $\|\cdot \|_{\LL^p}$ denotes the usual $\LL^p(\RR^2)$ norm. The proof of \eqref{eq:ck} simply follows from expanding the convolution and dividing $\mathbb{R}^2$ into two parts, the first one containing the points $(x,y)$ where $|y-x|\leqslant 1$, the second one being its complement.

There is a huge literature on that model: for instance, the Cauchy problem (\ref{Vorty}) for  an initial data in $\mathbb{L}^{1}(\mathbb{R}^2)$ (also $\mathbb{L}^{1}\cap \mathbb{L}^{p}$) was studied for instance in \cite{Ben}.  The existence of solutions of (\ref{Vorty}) for the case of an initial  finite measure  was  proved in 
\cite{Giga} and \cite{Kato}. Uniqueness in that case is a much more difficult problem:  it is shown in \cite{Giga} that the solution is unique if the atomic part of the initial vorticity is sufficiently small.  This last restriction has been removed recently
in \cite{Gallay}: there, the authors  obtain uniqueness when the initial data belongs to  the space of finite measures.

The question of a particle approximation to the $2d$ Navier-Stokes equation has
been already considered in the literature, as recalled in more detail in Section \ref{sec:related} below. The
aim of this paper is to provide a new rigorous approximation of the vorticity
field $\xi$ by stochastic particle systems, stronger than others:  contrary to the previous works  
where only the empirical measure of the density of particles is shown to converge, here we also prove that  a \textit{mollified empirical measure} converges \textit{uniformly}. More precisely, we  consider the  $N$-particle dynamics described, for each $N\in\mathbb{N}$, by the following system  of coupled stochastic differential equations in $\mathbb{R}^{2}$: for any $i=1,\dots,N$,
\begin{equation}
dX_{t}^{i,N}=  F\Big( \frac{1}{N}\sum_{k=1}^{N} (K\ast V^{N})(X_{t}^{i,N} -X_{t}^{k,N})\Big)\; dt + \sqrt{2 \nu} \; dW_{t}^{i} \label{itoassS0}%
\end{equation}
where: \begin{itemize}
\item for a given $M>0$ chosen ahead (see Theorem \ref{Thm 1} below), the function $F$ is given by
\begin{equation}\label{eq:defF2}
F  : \begin{pmatrix} x_1 \\ x_2 \end{pmatrix}  \mapsto \begin{pmatrix} (x_1\wedge M)\vee(-M)  \\ (x_2 \wedge M)\vee(-M) \end{pmatrix} ;
\end{equation}
\item $\{W_{t}^{i}, \; i\in\mathbb{N}\}$ is a family of independent standard Brownian motions on $\RR^2$ defined on a filtered probability space $\left(\Omega,\mathcal{F},\mathcal{F}_{t},\PP\right) $ ;
\item the \textit{interaction potential} $V^N: \RR^2\to \RR_+$ is continuous and will be specified later on.
 \end{itemize} Finally, $ \ast $ stands for the standard convolution product, and $\wedge$ (resp. $\vee$) is the usual notation for the minimum (resp. maximum) of two real numbers.
 
Now let us define the \textit{empirical process} of this particle system as \[S_{t}^{N}:=\frac{1}{N} \sum_{i=1}^{N}\delta_{X_{t}^{i,N}}, \] which is a (scalar) measure-valued
process associated to the $\RR^2$-valued processes $\{t\mapsto X_{t}^{i,N}\}_{i=1,...,N}$. Above,
$\delta_{a}$ is the delta Dirac measure concentrated at $a \in \RR^2$. For any test function $\phi: \RR^2 \to \RR$, we use the standard notation \[\langle S_t^N,\phi \rangle := \frac{1}{N}\sum_{i=1}^N \phi(X_t^{i,N}). \] Our interest lies in the investigation of the dynamical process
$t\mapsto S_{t}^{N}$ in the large particle limit $N\rightarrow\infty$.

The dynamics of the empirical measure is determined by the It\^{o} formula, which reads as follows: for any test function $\phi:\RR^2 \to \RR$ of class $C^2$, the empirical measure $S_{t}^{N}$ satisfies%
\begin{align}
\left\langle S_{t}^{N},\phi\right\rangle =  \left\langle S_{0}^{N}%
,\phi\right\rangle& +\int_{0}^{t}\left\langle S_{s}^{N}, F\big(K \ast V^{N}\ast
S_{s}^{N}\big) \sca \nabla\phi\right\rangle ds \notag\\ 
& + \nu  \int_{0}^{t}\left\langle S_{s}^{N},\Delta\phi\right\rangle
ds+\frac{\sqrt{2\nu}}{N}\sum_{i=1}^{N}\int_{0}^{t}\nabla\phi(X_{s}^{i,N}) \sca \; dW_{s}^{i}.
\label{ident fro S^N}%
\end{align}
Our main result is the uniform convergence (in the space  and time variables) of the \textit{mollified empirical measure} \[g_t^{N}:= V^{N} \ast S_t^{N}: x \in \RR^2 \mapsto \int_{\RR^2} V^N(x-y) dS_t^N(y)\]  to the solution of the Navier-Stokes equation written in vorticity form, given below in Theorem \ref{Thm 1}. Note that this probability measure is more regular than $S_t^N$, and its nicer properties allow us to obtain better convergence results.
 To prove the latter, we follow the  new approach presented  in \cite{flan}
and then in \cite{flan4,flan3,Simon-O}, based on semigroup theory.  Our source of inspiration has been the works of Oelschl\"ager 
\cite{Oel1} and Jourdain and M\'el\'eard \cite{Mela}, where stochastic approximations of PDE's are investigated. We assume that the  initial vorticity  satisfies $\xi^{\rm ini}\in \LL^{1}(\RR^2)\cap \LL^{\infty}(\RR^2)$, but we believe that our approach can be adapted for more irregular initial data,  for instance when $\xi^{\rm ini}$ belongs to $\LL^{1}(\RR^2)\cap \LL^{p}(\RR^2)$ with $ p \in (2, \infty)$. 

Let us also note that a similar strategy based on a mild formulation for the empirical measure (\textit{not} the mollified one) has recently been worked out in \cite{BC}, where the authors prove a law of large numbers for weakly interacting particles driven by independent Brownian motions, under weak assumptions on the initial condition.

\subsection{Related works}

\label{sec:related}

Rigorous derivations of particle approximations to the $2d$ Navier-Stokes equation have already been investigated in the literature.
Chorin in \cite{Chorin1} (see also \cite{Chorin})  proposed a heuristic probabilistic algorithm to
numerically simulate the solution of the Navier-Stokes equation in two dimensions,
by approximating the (scalar) vorticity function, involving cut-off 
kernels,  by random interacting ``point vortices''. The convergence of Chorin’s vortex method was  mathematically proved in 1982, for instance by Marchioro and Pulvirenti \cite{Marchiolo}, who interpreted the vortex equation in two dimensions with bounded and integrable initial condition as a generalized
McKean-Vlasov equation. Simultaneously, several authors obtained convergence proofs for Chorin's algorithm, see for instance Beale and Majda \cite{BM1,BM2} and
Goodman \cite{Good}. Finally, a rate of convergence result was obtained by Long in \cite{Long}. Later M\'el\'eard \cite{Mela2,Mela3} improved the results  and showed the convergence in the path
space of the empirical measures of the interacting particle system. Fontbona  \cite{Fontona} then
generalized that result in dimension $d=3$.

In addition, following the probabilistic interpretation of \cite{Marchiolo}, a series of papers investigates in detail the  \textit{propagation of chaos} property. In 1987, Osada \cite{Osa} proved such a result for an interacting particle system 
which approximates the solution of the McKean-Vlasov equation, without cut-off, by an analytical method based on 
generators of generalized divergence form, but only for large viscosities and 
bounded density initial data. The convergence of empirical measure and propagation of chaos have  then been
considered under more general assumptions and with innovative techniques of
entropy and Fisher information by Fournier, Hauray and Mischler
\cite{Fournier}. Finally we mention that recently, Jabin and Wang
\cite{Jabin}  showed that  a mean field approximation  converges to the solution
of the Navier-Stokes equation written  in vorticity form, and they are able to obtain quantitative optimal convergence rates for all finite marginal distributions of particles.

Besides, let us note that  Marchioro and Pulvirenti  \cite{Marchiolo} wished to describe
a unified approach for both Navier-Stokes and Euler equations, and for that reason they did not
fully exploit the stochastic nature of the Navier-Stokes equation, which is exactly what we are doing here. In fact,  we strongly exploit the Brownian perturbation of the system, and therefore we  
cannot cover the  results obtained for the Euler equation as  in \cite{Marchiolo}.

\subsection{Notations and results}

Before concluding the introduction, let us state the main results of this work. We first need to introduce some of our notations, which are listed below: 
\begin{itemize}
\item For any measure space  $(S,\Sigma,\mu)$, the standard $\mathbb{L}^p(S)$--spaces of real-valued functions, with $p \in [1,\infty]$, are provided with their usual norm denoted by $\| \cdot \|_{\mathbb L^p(S)}$, or $\|\cdot \|_{\LL^p}$ whenever the space $S$ will be clear to the reader. With a little abuse of notation, and as soon as no confusion regarding the space $S$ arises, we denote by $\langle f,g\rangle$ the inner product on $\mathbb{L}^2(S)$ between two functions $f$ and $g$. In more general cases, if the functions take values in some space $X$, the notation will become $\mathbb{L}^p(S ;  X).$ Finally, the norm $\Vert \cdot\Vert_{\mathbb L^p(S) \to \mathbb L^p(S)}$ is the usual operator norm. 
\item For any  $\varepsilon\in \RR, \mm{p\geqslant 1}$, and $d \in \NN$, we denote by
$\HH_{\mm{p}}^{\varepsilon}(  \mathbb{R}^{d})  $ the \textit{Bessel potential space}
\[\HH_{\mm{p}}^{\varepsilon}(  \mathbb{R}^{d}):= \Big\{ u \in \mm{\mathcal{S}'(\mathbb R^d)} \; ; \;  \mathcal{F}^{-1}\Big( \big(1+|\cdot|^{2}\big)^{\varepsilon/2%
}\; \mathcal{F} u(\cdot) \Big) \in \mathbb L^\mm{p}(\mathbb R^d)\Big\},  \]
where $\mathcal Fu$ denotes the \textit{Fourier transform} of $u$. These spaces are endowed with their norm 
\[
\left\Vert u\right\Vert _{\varepsilon,\mm{p}}%
:=\left\Vert \mathcal{F}^{-1}\Big((1+|\cdot|^{2})^{\varepsilon/2%
}\;\mathcal{F} u(\cdot)\Big)\right\Vert _{\mathbb L^{\mm{p}}\left(  \mathbb{R}^{d}\right)  }%
<\infty.
\]
Note that
 \[\left\Vert u\right\Vert _{0,2}=\left\Vert u\right\Vert_{\mathbb{L}^2(\mathbb{R}^d)} \]
and moreover,  for any $\varepsilon\leqslant 0$, we have (using Plancherel's identity and the fact that $(1+|\cdot|)^{\varepsilon/2} \leqslant 1$)
\[
  \left\Vert u\right\Vert _{\varepsilon,2} 
=\left\Vert    (1+|\cdot|^{2})^{\varepsilon/2}\;\mathcal{F} u(\cdot)
\right\Vert_{\mathbb{L}^2(\mathbb{R}^d)} 
\leq \left\Vert \mathcal{F} u
\right\Vert_{\mathbb{L}^2(\mathbb{R}^d)} = \left\Vert u\right\Vert_{0,2} \; .
 \]
\item  Let us now recall the definition of \textit{Sobolev-\mm{Slobodeckij} spaces}. Let  $U$ be a general, possibly non smooth, open set in $\mathbb{R}^d$.  
Let \mm{$p \geqslant 1$}. For any positive integer  $m$ we define
\[
\mathbb{W}^{m,p}(U) := \Big\{ f\in  \mathbb{L}^p(U) \; ; \;  \| f\|_{m,p}:= \sum_{|\mm{s}|\leq m} \| D^\mm{s} f\|_{\mathbb{L}^p(U)}<\infty
    \Big\}. 
\]
For any $\mm{\varepsilon}>0$ \textit{not} an integer, we define 
\[ 
\mathbb{W}^{\mm{\varepsilon},p}(U):=  \Big\{ f\in  \mathbb{W}^{\left[ \mm{\varepsilon}\right],p}(U) \;  ; \;
\| f\|_{\mm{\varepsilon},p}:= \sum_{|\mm{s}|=\left[ \mm{\varepsilon}\right]} \mathcal{I}_\mm{s}(f)
<\infty \Big\}, 
\] where
\[ \mathcal{I}_\mm{s}(f):= \bigg(   \int_{U} \int_{U} \frac{|D^{\mm{s}}f(x)-D^{\mm{s}}f(y)|^p }{|x-y|^{d+(\mm{\varepsilon-\left[ \varepsilon\right]})p}}   dx  dy      \bigg)^{1/p}.  \]
We observe that:  when $U= \mathbb{R}^d$  and $p=2$, the Sobolev space $\mathbb{W}^{\mm{\varepsilon},2}(\mathbb{R}^d)$ and the Bessel space $\HH_{2}^{\mm{\varepsilon}}(\mathbb{R}^d)$ coincide: $\mathbb{W}^{\mm{\varepsilon},2}(\mathbb{R}^d)=\HH_{2}^{\mm{\varepsilon}}(\mathbb{R}^d)$. Moreover,   note that
 for any open set $U$, $\mathbb{W}^{\mm{\varepsilon},2}(U)$  roughly  corresponds to distributions $f$ on $U$ which are restrictions of some $f\in \mathbb{H}^{\mm{\varepsilon}}_2(\mathbb{R}^{d})$, see \cite{Triebel} for instance. \mm{Also we recall that $\mathbb{H}_p^\varepsilon \subset \mathbb{W}^{\varepsilon,p}$ for any $p>1 $ and $\varepsilon \geqslant 0.$}

\item The space of smooth real-valued functions with compact support in $\mathbb{R}^d$ is denoted by $C_0^\infty(\mathbb{R}^d)$. The space of functions of class $C^k$ with $k \in \mathbb{N}$ is denoted by $C^k(\mathbb{R}^d)$. Finally, the space of bounded functions is denoted by $C_b(\RR^d)$.
\end{itemize}
\bigskip

Now, let us give our main assumptions: first, we need to be more precise about the interaction potential $V^N$ ; second, recall that we are interested in the large $N$ limit of the process $t \mapsto S_t^N$, and we therefore need to specify its initial condition, which is  random, and  supposed to be ``almost chaotic'', see point \textit{4.}~in Assumption \ref{assump} below. The expectation with respect to $\PP$ is denoted by $\EE$. We say that a function $f:\RR^2 \to \RR_+$ is a probability density if $\int_{\RR^2} f(x)dx=1$.

\begin{assumption}\label{assump}

We assume that there exists a probability density $V:\RR^2 \to \RR_+$, and a parameter $\beta \in [0,1]$, such that
\begin{enumerate}
\item for any $x \in \RR^{2}$, $V^{N}(x)=N^{2\beta}V(N^{\beta}x)$ ;

\item $V\in C_{0}^{\infty}(\mathbb{R}^{2})$ ;
\item \mm{there exists $p > 2 $ and  $\frac2p < \alpha < 1$ such that, for any $q>0$,}
\begin{equation}
\sup_{N \in \NN} \EE\left[  \big\|  V^{N} \ast  S_{0}^{N} \big\|_{\mm{\alpha,p}}^{\mm{q}}   \right] \ 
<\infty \; ;  \label{initial cond}%
\end{equation}

\item there exists $\xi^{\rm ini} \in \mathbb{L}^1(\RR^2) \cap \mathbb{L}^\infty(\mathbb{R}^2) $ such that the sequence of measures $\{S_0^N\}_N$ weakly converges  to the initial measure $\xi^{\rm ini}(\cdot)dx$, as $N\to\infty$, in probability. 

  \item finally, the  parameters  \mm{$(\beta,\alpha,p)$} satisfy: \begin{equation} \label{eq:alpha} 0 < \beta <  \mm{\frac{1}{4 +2 \alpha-\frac4p}} < \frac14.\end{equation}

\end{enumerate}

\end{assumption}

\begin{remark} In \cite{flan3} the authors provide sufficient conditions for the validity of \eqref{initial cond}. 
To
understand, very roughly, condition \eqref{initial cond}, think to dimension 1 and $\beta=\frac12$:
if we have $N$ points on the real line, distributed very regularly and we
convolve (\textit{i.e.}~observe) them by a smooth kernel $V^{N}$ such that it averages
$\sqrt{N}$ of them, the result of the convolution is a function which does not oscillate too much; opposite to the case in which the concentration of the
kernel $V^{N}$ is such that it averages only very few points, so that the
convolution is exposed to the granularity of the sample, its minor
irregularities and concentrations. Condition \eqref{initial cond} quantifies this control on oscillations.

Some further intuition comes from kernel smoothing algorithms, those which
replace an histogram by a smooth curve; the histogram is based on a partition
of the real line and simply counts the relative frequency of a sample in the
intervals of the partition, kernel smoothing convolves the sample with a
smooth kernel, \textit{e.g.}~a Gaussian kernel with standard deviation $h$. If,
compared to the cardinality $N$ and distribution of the sample, the partition
is made of too small intervals or $h$ is too small, we see an histogram or a
kernel smoothing function which oscillates very much. This happens in
particular when $h$ is of the order of the distance between nearest neighbor
points in the sample. But if we take $h$ much larger, although very small
compared to the full sample, for instance $h\sim N^{-1/2}$ (if the points are
concentrated in a set of size of order one), the graph of the curve given by
kernel smoothing algorithms is not oscillating anymore. 

Let us emphasize, however, that condition \eqref{initial cond} is a joint condition on the size
of the smoothing kernel compared to the cardinality of the sample (the issue
stressed in the previous sentences), but also on the regularity of the sample.
If it has extreme concentrations around some points, the pictures above
change, oscillations may re-appear.
\end{remark}

In all what follows we fix a time horizon $T \geq 0$.

\begin{theorem}
\label{Thm 1}  We assume Assumption \ref{assump}  and we consider the particle system \eqref{itoassS0} with the parameter $M$ which satisfies \begin{equation}\label{eq:assM} M \geqslant c_K\big(1+ \|\xi^{\rm ini}\|_{\mathbb{L}^\infty}\big),\end{equation} where $c_K$ has been defined in \eqref{eq:ck}.

 Then, for every
$\mm{\eta \in(\frac2 p,\alpha)}$, the sequence of processes $\{t\mapsto g_t^{N}=V^N \ast S_t^N\}_{N\in\NN}$ converges in probability with respect to the 
\smallskip

\begin{itemize}

\item weak topology of $\LL^{2}\left(  [0,T]\; ;\; \HH_{\mm{p}}^{\alpha}(
\mathbb{R}^{2})  \right)  $, 

\item strong topology of $C\left( [ 0,T]\; ;\; \mm{\mathbb{W}_{\mathrm{loc}}^{\eta,p}}(
\mathbb{R}^{2})  \right)  $,  
\end{itemize}
as $N\rightarrow\infty$, to the unique weak solution of the   partial differential equation (PDE)
\begin{equation} \left\{\begin{aligned} 
& \partial_{t} \xi+  \mathrm{div}\big(\xi(K\ast \xi) \big) = \nu \Delta \xi \,, \vphantom{\bigg(} \\
&\xi(0,x)=\xi^{\rm ini}(x), \qquad \qquad \quad x \in \RR^2, t>0. \end{aligned}\right. \label{PDEintr2}%
\end{equation}
 Namely, for any real-valued test function $\phi\in C_{0}^{\infty}(\mathbb{R}^{2})$ and any $t\geqslant 0$,
 it holds
\begin{equation}\label{limiteq}
\left\langle \xi(t,\cdot),\phi\right\rangle =\left\langle \xi^{\rm ini},\phi\right\rangle
+  \int_{0}^{t} \big\langle  \xi(s,\cdot),  (K\ast \xi) (s,\cdot)\sca \nabla \phi\big\rangle \; ds+  \nu 
\int_{0}^{t}\big\langle \xi(s,\cdot),  \Delta \phi\big\rangle \; ds.
\end{equation}
\end{theorem}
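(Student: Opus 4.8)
The plan is to rewrite the statement as a convergence result for the \emph{mild} (semigroup) form of the equation satisfied by $g^N=V^N\ast S^N$, and to identify every subsequential limit with the unique weak solution of \eqref{PDEintr2}. Write $P_t:=e^{\nu t\Delta}$ for the heat semigroup. First I would test the It\^o identity \eqref{ident fro S^N} against $\phi_x(\cdot):=V^N(x-\cdot)$, for fixed $x\in\RR^2$, using $\nabla_y\phi_x=-(\nabla V^N)(x-\cdot)$ and $\Delta_y\phi_x=(\Delta V^N)(x-\cdot)$. Since $(\nabla V^N)(x-X^{i,N})=\nabla_x\big[V^N(x-X^{i,N})\big]$ while $F(u_s^N(X^{i,N}))$ does not depend on $x$, the drift is an exact $x$-divergence, which is the crucial structural gain; writing $u_s^N:=K\ast g_s^N$ for the mollified reconstructed velocity and $F$ for the cut-off \eqref{eq:defF2}, this yields
\[
g_t^N(x)=g_0^N(x)+\nu\int_0^t\Delta g_s^N(x)\,ds-\int_0^t\mathrm{div}\big[\big((F\circ u_s^N)\,S_s^N\big)\ast V^N\big](x)\,ds+M_t^N(x),
\]
with $M_t^N(x)=-\tfrac{\sqrt{2\nu}}{N}\sum_i\int_0^t(\nabla V^N)(x-X_s^{i,N})\sca dW_s^i$. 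The variation-of-constants formula then gives the mild equation
\[
g_t^N=P_t g_0^N-\int_0^t P_{t-s}\,\mathrm{div}\big[\big((F\circ u_s^N)\,S_s^N\big)\ast V^N\big]\,ds+Z_t^N,\qquad Z_t^N:=\int_0^t P_{t-s}\,dM_s^N,
\]
to be compared with $\xi(t)=P_t\xi^{\rm ini}-\int_0^tP_{t-s}\,\mathrm{div}\big(\xi(s)\,(K\ast\xi)(s)\big)\,ds$.

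Next I would establish uniform-in-$N$ a priori bounds by estimating this mild equation in $\HH_p^\alpha(\RR^2)$. The initial term is controlled by Assumption \ref{assump}.3, since $P_t$ is a contraction on $\HH_p^\alpha$ and \eqref{initial cond} bounds $\EE[\|g_0^N\|_{\alpha,p}^q]$ uniformly. For the drift, the cut-off \eqref{eq:defF2} makes $F\circ u_s^N$ bounded by $M$, so the vector field $((F\circ u_s^N)S_s^N)\ast V^N$ is pointwise dominated by $M\,g_s^N$ and its low-regularity norm is controlled by $M\|g_s^N\|$ up to constants; together with the analytic-semigroup smoothing estimate $\|P_{t-s}\,\mathrm{div}\,f\|_{\alpha,p}\lesssim(t-s)^{-\theta}\|f\|_{\gamma,p}$ with $\theta<1$, this closes into a self-consistent Gronwall bound after integrating the integrable singularity. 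For the stochastic convolution $Z^N$, the independence of the Brownian motions makes the quadratic variation of $M^N$ of order $N^{-1}$ times powers of the blowing-up quantities $\|\nabla V^N\|$, and the exponent constraints \eqref{eq:alpha} are exactly what force $\EE[\|Z^N\|^q]\to 0$. This should give $\sup_N\EE\big[\|g^N\|_{\LL^2([0,T];\HH_p^\alpha)}^q\big]<\infty$, plus a uniform time-H\"older bound with values in a weaker space. The $\LL^2([0,T];\HH_p^\alpha)$ bound furnishes relative compactness for the weak topology, while an Aubin--Lions--Simon argument — combining the $\HH_p^\alpha$ bound, the time-regularity, and the compact embedding $\HH_p^\alpha\hookrightarrow\hookrightarrow\mathbb{W}^{\eta,p}_{\mathrm{loc}}$ valid for $\eta<\alpha$ — yields relative compactness in $C([0,T];\mathbb{W}^{\eta,p}_{\mathrm{loc}})$; here $\eta>\tfrac2p$ ensures $\mathbb{W}^{\eta,p}\hookrightarrow C_b$, which is the source of the uniform-in-space control. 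By Prokhorov the laws of $\{g^N\}$ are tight in both topologies, and I pass to a subsequential limit $\xi$ (upgrading to almost sure convergence along a further subsequence by Skorokhod).

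Along this subsequence I would pass to the limit in the mild equation. The stochastic convolution vanishes by the previous step. For the drift I must show $((F\circ u_s^N)S_s^N)\ast V^N\to\xi\,(K\ast\xi)$: first $u_s^N=K\ast g_s^N\to K\ast\xi$ by continuity of $K\ast$ and $g^N\to\xi$; second, the cut-off becomes asymptotically inactive, because $\xi^{\rm ini}$ is a probability density ($g_0^N\geq0$ with unit mass) so the equation preserves positivity and mass, giving $\|\xi(t)\|_{\LL^1}=1$ and, by the maximum principle, $\|\xi(t)\|_{\LL^\infty}\leq\|\xi^{\rm ini}\|_{\LL^\infty}$; combined with \eqref{eq:ck} and \eqref{eq:assM} this gives $\|K\ast\xi(t)\|_{\LL^\infty}\leq c_K(1+\|\xi^{\rm ini}\|_{\LL^\infty})\leq M$, hence $F\circ(K\ast\xi)=K\ast\xi$; third, since $V^N$ is an approximate identity, the outer convolution and the product $(\cdot)S_s^N$ can be replaced by $g_s^N(K\ast\xi)$ up to commutator errors that vanish as $N\to\infty$. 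This identifies every limit point as a weak solution of \eqref{limiteq}. Finally, since $\xi^{\rm ini}\in\LL^1\cap\LL^\infty$, the weak solution of \eqref{PDEintr2} is unique (as recalled in the introduction), so every subsequential limit equals the same deterministic $\xi$; thus the whole sequence converges, and convergence in law to a deterministic limit upgrades to convergence in probability in both topologies.

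The hard part will be the a priori estimate and the passage to the limit in the nonlinearity while the mollifier $V^N$ concentrates at rate $N^\beta$: the derivatives of $V^N$ blow up like powers of $N^\beta$, and these must be beaten by the $N^{-1}$ gain coming from the independence of the Brownian motions in $Z^N$ and by the smoothing of the analytic semigroup. The precise window \eqref{eq:alpha} on $(\beta,\alpha,p)$ is exactly what makes these competing scalings close simultaneously, and obtaining the \emph{strong} $\HH_p^\alpha$ control — rather than a mere $\LL^2$ energy bound — is the delicate technical point, since it is precisely this control that produces the uniform-in-space convergence claimed in the theorem.
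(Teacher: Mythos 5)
Your outline coincides with the paper's strategy in every structural respect: the mild formulation for $g^N$, the Gronwall closure in $\HH_p^{\alpha}$ using analytic-semigroup smoothing and the boundedness of $F$, the Burkholder--Davis--Gundy treatment of the stochastic convolution with the scaling window \eqref{eq:alpha}, a fractional time-regularity bound in a negative Sobolev space, compactness \`a la Simon, Skorokhod, identification of limit points, uniqueness, and the upgrade to convergence in probability (your observation that convergence in law to a deterministic limit implies convergence in probability replaces the paper's Gy\"ongy--Krylov lemma; both work). However, there is one genuine gap, in the removal of the cut-off. Every subsequential limit $\xi$ solves the \emph{truncated} PDE \eqref{PDEintrAUX}, i.e.\ $\partial_t\xi+\mathrm{div}\big(\xi\,F(K\ast\xi)\big)=\nu\Delta\xi$, and you propose to deactivate $F$ by applying the maximum principle to \emph{this} equation to obtain $\|\xi(t)\|_{\LL^\infty}\leq\|\xi^{\rm ini}\|_{\LL^\infty}$. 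That step is not available: the componentwise truncation \eqref{eq:defF2} destroys the divergence-freeness of the velocity field, so the drift $b=F(K\ast\xi)$ may satisfy $\mathrm{div}\,b\neq 0$, and for a conservation-form equation $\partial_t\xi+\mathrm{div}(\xi b)=\nu\Delta\xi$ with compressible $b$ the $\LL^\infty$ norm can grow, since the zeroth-order term $\xi\,\mathrm{div}\,b$ has no sign. Nor can the sharp bound be extracted from your a priori estimates, which only produce a constant $C_{T,M,\nu}$. Note also that the uniqueness you invoke ``as recalled in the introduction'' concerns the untruncated vorticity equation, whereas what the identification actually requires is uniqueness for the truncated PDE in the class where the limit lives. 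The paper argues in the opposite direction: it proves uniqueness for the truncated equation (Theorem \ref{Thm PDE}, a mild-formulation Gronwall estimate in $\LL^1\cap\LL^\infty$), then observes that the solution of the true vorticity equation from \cite{Ben} satisfies $\|\xi\|_{\LL^\infty}\leq\|\xi^{\rm ini}\|_{\LL^\infty}$ --- legitimately, because \emph{there} the drift $K\ast\xi$ is divergence-free --- hence by \eqref{eq:ck} and \eqref{eq:assM} the cut-off is inactive on it, so it also solves \eqref{PDEintrAUX}, and uniqueness of \eqref{PDEintrAUX} identifies it with the limit point. Your argument could be repaired along these lines (or by a continuity-in-time bootstrap starting from $\|K\ast\xi^{\rm ini}\|_{\LL^\infty}\leq M$), but as written the identification step fails.

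A secondary, smaller under-specification: in the nonlinearity you assert that replacing $S_s^N$ by $g_s^N$ produces commutator errors that vanish ``since $V^N$ is an approximate identity''. True, but not for free: the error is governed by the modulus of continuity of $F(K\ast g_s^N)$ at scale $N^{-\beta}$, uniformly over the (random) sequence, and the quantitative input the paper uses is an a.s.\ bound on $K\ast g^N$ in $\LL^2\big([0,T];C^{\tilde\eta}(\RR^2)\big)$, obtained from Calder\'on--Zygmund estimates for the Biot--Savart kernel together with Sobolev embedding, which yields the explicit rate $N^{-\tilde\eta\beta}$. You correctly flag this as a hard point, but the H\"older mechanism should be named, since without it the replacement step does not close.
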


\begin{remark}
Note that the limiting PDE \eqref{PDEintr2} does not depend on the value of the parameter $\beta \in (0, \mm{\frac{1}{4+2\alpha - \frac2p }})$.
\end{remark}

\begin{remark}
 The previous result implies, by the Sobolev embeddings' Theorem (see \cite[Section 2]{Triebel} for instance), the strong convergence in $C([0,T]\times K)$ for every 
compact set $K\subset\mathbb{R}^{2}$.
\end{remark}

Here it follows an outline of the paper: we start in Section \ref{sec:proof1} with an exposition of the strategy to prove Theorem \ref{Thm 1}. We will prove the technical estimates in Section \ref{sec:proof-tech}.  We chose to investigate in details the case where $\xi^{\mathrm{ini}}$ is
a probability density, in particular is non-negative and then, in Appendix \ref{app}, we
show that the same result holds in the general case $\xi^{\mathrm{ini}}%
\in\mathbb{L}^{1}(\mathbb{R}^{2})\cap\mathbb{L}^{\infty}(\mathbb{R}^{2})$,
without assumptions on the sign and the value of $\int\xi^{\mathrm{ini}%
}\left(  x\right)  dx$, by a simple decoupling argument. 
Finally, concerning the
importance and geometrical interpretation of the uniform convergence of the
mollified empirical measure, we have included a short discussion in Appendix
\ref{appB}. In particular, the uniform convergence  does not follow from the weak convergence of the
empirical measure.

\section{Strategy of the proof of Theorem \ref{Thm 1}}
\label{sec:proof1}

 There are \mm{three} main steps to derive the convergence result stated in Theorem \ref{Thm 1}:
\begin{enumerate}
\item  First, we write the \textit{mild formulation} of the identity satisfied by
\begin{equation}
g_{t}^{N}(x)=(V^{N}\ast S_{t}^{N})(x)=\int_{\RR^{2}} V^{N}\left(
x-y\right)  dS_{t}^{N}\left(  y\right),  \label{def g}%
\end{equation}
see Section \ref{sec:mild}. We will obtain a ``closed" inequality (note that $g^{N}$ already appears in the right hand side of \eqref{ident fro S^N}), and then prove two uniform bounds, see Proposition \ref{prop:bound1} and Proposition \ref{prop:bound2}.  To that aim 
we will use two main properties of the function $F$: first
 $F$ is Lipschitz continuous, and second it is bounded, precisely $| F(x)-F(y)|\leq |x-y |$ and   $\|F\|_{\mathbb{L}^\infty(\mathbb{R}^2)} \leq M$.

\item Then we apply compacteness arguments and Sobolev embeddings to have
subsequences which converge so as to pass to the limit, see Section \ref{subsect compactness}, Section \ref{sec:pass} and Section \ref{sec:conv}.

\item Finally, we are able to conclude the proof since the solution to the limiting PDE \eqref{PDEintr2} is unique, as is it proved in Section \ref{sec:unique}.

\end{enumerate}

The support of a function $f$ is denoted by $\mathrm{Supp} f$. When a constant $C$ will depend on some parameter, say $\alpha$, this will be highlighted in index by $C_\alpha$, but the constant may change from line to line. 

 \subsection{Analytic semigroup}

Let us first introduce the operator \[A:\mathcal{D}(
A)  \subset \mathbb L^{\mm{p}}(  \mathbb{R}^{d})  \rightarrow
\mathbb L^{\mm{p}}( \mathbb{R}^{d})  \] defined as $Af=\nu\Delta f$. It is the
infinitesimal generator of an analytic semigroup (the heat semigroup) in
$\mathbb L^{\mm{p}}(  \mathbb{R}^{d})  $ (see for instance \cite{Pazy}). We denote
this semigroup by $\{e^{tA}, \; t\geq0\}$, which is simply given by%
\[
\left(  e^{tA}f\right)  \left(  x\right)  =\int_{\mathbb{R}^{d}}\frac
{1}{\left(  4\nu\pi t\right)  ^{d/2}}e^{-\vert x-y\vert ^{2}/(4\nu t)}f\left(  y\right)  dy, \qquad f \in \mathbb{L}^\mm{p}(\RR^d).
\]
Moreover, denoting by $\mathrm{I}$ the identity operator, we know that, for any $\varepsilon \in \RR$, the domain of the operator $(\mathrm{I}-A)^{\varepsilon/2}$ is given by%
\[
\mathcal{D}\big(  \left(  \mathrm{I}-A\right)  ^{\varepsilon/2}\big)  = \mathbb H_{\mm{p}}^{\varepsilon}(
\mathbb{R}^{d})
\]
with equivalent norms, where    $\left(\mathrm{I}-A\right)^{\varepsilon/2}$ is the Bessel potential operator given by $\left(\mathrm{I}-A\right)^{\varepsilon/2}f= \mathcal{F}^{-1} ((1+|\cdot|^{2})^{\varepsilon/2}\;\mathcal{F} f(\cdot)) $. Recall also from \cite{Pazy} that, for every $\varepsilon>0$,
and $T>0$, \mm{and $p>1$}, there is a constant $C_{\varepsilon,T,\nu,\mm{p}} >0$ 
such that, for any $t \in (0,T]$,
\begin{equation}
\big\Vert\left(  \mathrm{I}-A\right)^{\varepsilon}e^{tA}\big\Vert_{\mathbb L^{\mm{p}}\rightarrow \mathbb L^{\mm{p}}}\leq\frac{C_{\varepsilon,T,\nu,\mm{p}}}{t^{\varepsilon}}.%
\label{eq:etoile}\end{equation}
We are now ready to prove Theorem \ref{Thm 1}, therefore the dimension is from now on $d=2$.

\subsection{The equation for $V^{N}\ast S_{t}^{N}$ in
mild form}
\label{sec:mild}
%Define $g_{t}^{N}(x)$ as above in (\ref{def g}). 
We want to
deduce an identity for $g_{t}^{N}(x)$ from \eqref{ident fro S^N}.
For every  $x\in\mathbb{R}^{2}$ take, in identity \eqref{ident fro S^N}, the test
function $\phi_{x}\left(  y\right) =V^{N}\left(  x-y\right)  $. 
We get (recall the definition \eqref{def g} of $g_{t}^{N}$)
\begin{align}
g_{t}^{N}  (x)    = 
g_{0}^{N}  (x)& +\int_{0}^{t}\left\langle S_{s}%
^{N}, F(K\ast g_{s}^{N}) \;\sca \; \nabla  V^{N}  \left(  x-\cdot\right)  \right\rangle ds \notag\\
& \label{equagn}
  +  \nu \int_{0}^{t}  \Delta g_{s}^{N} (x)ds+\frac{\sqrt{2\nu}}{N}\sum_{i=1}^{N}\int_{0}^{t}\nabla  V^{N}  \left(  x-X_{s}^{i,N}\right) \sca\;
dW_{s}^{i}.
\end{align}
In the sequel, let us write for the sake of clarity,%
\[
\big\langle S_{s}^{N},F(K\ast g_{s}^{N}) \; \sca \;
\nabla V^{N} \left(  x-\cdot\right)  \big\rangle =:\Big(
\nabla V^{N} \ast\left(
F(K\ast g_{s}^{N}) S_{s}^{N}\right)  \Big)  \left(  x\right)
\]
and similarly for analogous expressions. Hence we can write (using the same idea as in   \cite{flan}):
\begin{align*}
g_{t}^{N}    =e^{tA} {g}_{0}^{N} & +\int_{0}^{t}e^{\left(  t-s\right)  A%
}\left(  \nabla  V^{N}\ast\left(
F(K\ast g_{s}^{N}) S_{s}^{N}\right)  \right)  ds\\
& +\frac{{\sqrt{2\nu}}}{N}\sum_{i=1}^{N}\int_{0}^{t}e^{\left(  t-s\right)  A}\left(
\nabla V^{N}  \left(
\cdot-X_{s}^{i,N}\right)  \right)  \sca\; dW_{s}^{i}.
\end{align*}
By inspection in the convolution explicit formula for $e^{\left(  t-s\right)
A}$, one can see that \[e^{\left(  t-s\right)  A}\nabla f=\nabla e^{\left(
t-s\right)  A}f,\] and then one can use the semigroup property, so as to write
\begin{align}
g_{t}^{N}   =e^{tA} {g}_{0}^{N}  &+\int_{0}^{t}   \nabla e^{\left(  t-s\right)  A%
}\left(    V^{N}\ast\left(
F(K\ast g_{s}^{N}) S_{s}^{N}\right)  \right)  ds \notag
\\ \label{equatiog1h}
 & +\frac{{\sqrt{2\nu}}}{N}\sum_{i=1}^{N}\int_{0}^{t}e^{\left(  t-s\right)  A} \left(
\nabla V^{N} \left(\cdot-X_{s}^{i,N}\right)\right) \sca \;  dW_{s}^{i}.
\end{align}
%\subsection{Two estimates on $g^{N}$}

\mm{Recall that three parameters $p\geqslant 2$, $\alpha \in (\frac2p,1)$ and $\beta$ are fixed  from Assumption \ref{assump} for the rest of the paper. From now on every constant $C_\lambda$ which depends on some parameter $\lambda$ may also depend on the three parameters $\alpha,p,\beta$: we decide to withdraw them from the notation in order not to burden the paper.} In the following we  will prove two important bounds: 

\begin{proposition}\label{prop:bound1} We assume Assumption  \ref{assump}. \mm{Let $q\geqslant 2$}. 
Then there exists a positive constant $C_{\mm{T,M, \nu,q}}$ such that, for
all $t\in(0,T]$ and $N\in\mathbb{N}$, it holds:
\begin{equation}
\mathbb{E}\left[  \left\Vert \left(  \mathrm{I}-A\right) ^{\mm{\alpha}/2} g_{t}^{N}   \right\Vert _{\mm{\mathbb L^{p}}\left(  \mathbb{R}^{2}\right)  }^{\mm{q}}\right]  \leq
C_{\mm{T, M, \nu, q}}. \label{eq:firstestim}
\end{equation}\end{proposition}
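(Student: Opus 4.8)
The plan is to apply the Bessel operator $(\mathrm{I}-A)^{\alpha/2}$ to the mild identity \eqref{equatiog1h} and to control the $\LL^q(\Omega)$-norm of the $\LL^p(\RR^2)$-norm of each of the three resulting terms separately; this is legitimate because all the operators in play ($(\mathrm{I}-A)^{\alpha/2}$, $\nabla$, $e^{sA}$) are Fourier multipliers, hence commute. Set $u_N(t):=\big(\EE\|(\mathrm{I}-A)^{\alpha/2}g_t^N\|_{\LL^p}^q\big)^{1/q}$. For the initial term, since $e^{tA}$ commutes with $(\mathrm{I}-A)^{\alpha/2}$ and is a contraction on $\LL^p(\RR^2)$, one has $\|(\mathrm{I}-A)^{\alpha/2}e^{tA}g_0^N\|_{\LL^p}\leqslant\|g_0^N\|_{\alpha,p}$, whose $q$-th moment is bounded uniformly in $N$ \emph{directly} by \eqref{initial cond} (recall $g_0^N=V^N\ast S_0^N$). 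This term costs nothing beyond the hypotheses.

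For the drift term the two key facts are that $F$ is bounded by $M$ and that $g_s^N\geqslant 0$ (here we use $V^N\geqslant 0$ and the positivity of the empirical measure $S_s^N$, consistent with treating the case $\xi^{\rm ini}\geqslant 0$). These give the pointwise domination $\big|V^N\ast(F(K\ast g_s^N)\,S_s^N)\big|\leqslant M\,(V^N\ast S_s^N)=M\,g_s^N$, whence $\|V^N\ast(F(K\ast g_s^N)S_s^N)\|_{\LL^p}\leqslant M\|g_s^N\|_{\LL^p}\leqslant M\|g_s^N\|_{\alpha,p}$. Writing $\nabla=(\nabla(\mathrm{I}-A)^{-1/2})(\mathrm{I}-A)^{1/2}$, where the first factor is a bounded Riesz-type operator on $\LL^p$ for $p>1$, and invoking \eqref{eq:etoile} with $\varepsilon=\tfrac{1+\alpha}{2}$, gives $\|(\mathrm{I}-A)^{\alpha/2}\nabla e^{(t-s)A}\|_{\LL^p\to\LL^p}\leqslant C(t-s)^{-(1+\alpha)/2}$, an integrable singularity since $\alpha<1$. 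Combined with Minkowski's integral inequality in $\LL^q(\Omega;\LL^p)$, the drift contribution to $u_N(t)$ is bounded by $C M\int_0^t(t-s)^{-(1+\alpha)/2}u_N(s)\,ds$.

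The genuine difficulty is the stochastic term $Z_t^N=\tfrac{\sqrt{2\nu}}{N}\sum_i\int_0^t e^{(t-s)A}\nabla V^N(\cdot-X_s^{i,N})\sca dW_s^i$, where one must beat the concentration of $V^N$. Since $\LL^p(\RR^2)$ with $p\geqslant 2$ is a $2$-smooth (martingale type $2$) Banach space, a Burkholder--Davis--Gundy inequality together with the Gaussian square-function description of the radonifying norm, the triangle inequality in $\LL^{p/2}$, the independence of the $\{W^i\}$ and the translation invariance of the $\LL^p$-norm reduce, for \emph{every} $q\geqslant 2$, the $q$-th moment to the deterministic quantity
\[
\EE\big\|(\mathrm{I}-A)^{\alpha/2}Z_t^N\big\|_{\LL^p}^q\leqslant C_{p,q}\Big(\frac{\nu}{N}\int_0^t\big\|(\mathrm{I}-A)^{\alpha/2}e^{(t-s)A}\nabla V^N\big\|_{\LL^p}^2\,ds\Big)^{q/2},
\]
the factor $N^{-1}$ being the law-of-large-numbers gain coming from the $N^{-1}$ normalisation and the independence of the Brownian motions. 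It then remains to estimate the kernel: splitting $(\mathrm{I}-A)^{\alpha/2}\nabla e^{rA}=(\mathrm{I}-A)^{\sigma/2}e^{rA}\cdot(\mathrm{I}-A)^{(\alpha-\sigma)/2}\nabla$ for some $\sigma\in(0,1)$, bounding the first factor by $Cr^{-\sigma/2}$ via \eqref{eq:etoile} and the second through the dilation scaling $V^N(x)=N^{2\beta}V(N^\beta x)$, one gets $\|(\mathrm{I}-A)^{\alpha/2}e^{rA}\nabla V^N\|_{\LL^p}\leqslant C\,r^{-\sigma/2}N^{\beta(3+\alpha-\sigma-2/p)}$; integrating in time ($\sigma<1$) yields $\big(\EE\|(\mathrm{I}-A)^{\alpha/2}Z_t^N\|_{\LL^p}^q\big)^{1/q}\leqslant C_T\,N^{-1/2+\beta(3+\alpha-\sigma-2/p)}$.

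This is exactly where \eqref{eq:alpha} enters: letting $\sigma\uparrow 1$ the exponent of $N$ tends to $-\tfrac12+\beta(2+\alpha-\tfrac2p)$, which is $\leqslant 0$ precisely when $\beta\leqslant\frac{1}{4+2\alpha-4/p}$; since the inequality in \eqref{eq:alpha} is strict, one may fix $\sigma<1$ close enough to $1$ that the power of $N$ is non-positive, giving a bound on the noise term uniform in $N$ and in $t\in[0,T]$. Collecting the three estimates produces $u_N(t)\leqslant C+CM\int_0^t(t-s)^{-(1+\alpha)/2}u_N(s)\,ds$ with $C$ uniform in $N$, and the singular (Henry--Gronwall) inequality then closes the argument and gives the claimed bound $\EE\|(\mathrm{I}-A)^{\alpha/2}g_t^N\|_{\LL^p}^q\leqslant C_{T,M,\nu,q}$. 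The hard part is thus the stochastic term: pinning down the right square-function/BDG estimate in $\LL^p$, extracting the $N^{-1/2}$ gain, and balancing it against the blow-up $N^{\beta(\cdots)}$ of the derivatives of the concentrating kernel $V^N$ — which is exactly what constraint \eqref{eq:alpha} is designed to permit.
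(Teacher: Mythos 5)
Your proposal is correct, and its skeleton (mild formulation \eqref{equatiog1h}, three-term decomposition, singular Gronwall) is the same as the paper's; the initial-datum and drift terms are handled exactly as in Steps 2--3 of the paper's proof, with the small merit that you make explicit two things the paper uses tacitly: the boundedness of the Riesz-type operator $\nabla(\mathrm{I}-A)^{-1/2}$ on $\LL^p$, and the fact that the final inequality $u_N(t)\leq C+C\int_0^t(t-s)^{-(1+\alpha)/2}u_N(s)\,ds$ requires the singular-kernel (Henry--Gronwall) variant of Gronwall's lemma. Where you genuinely diverge is the stochastic term, i.e.\ the paper's Lemma \ref{lemma est stoch term}: the paper first pays the Sobolev embedding $\HH_2^{1-2/p}\hookrightarrow\LL^p$ to move from $\LL^p(\RR^2)$ into the Hilbert space $\LL^2(\RR^2)$, raising the Bessel exponent from $\alpha$ to $1+\alpha-\frac2p$, then applies Hilbert-space Burkholder--Davis--Gundy plus translation invariance, with the singularity split $(\mathrm{I}-A)^{-\delta/2}\nabla e^{rA}$ acting on $(\mathrm{I}-A)^{(1+\alpha-2/p+\delta)/2}V^N$; you instead stay in $\LL^p$ throughout, invoking the martingale-type-2/square-function BDG estimate with the triangle inequality in $\LL^{p/2}$, and the equivalent parametrization $\sigma=1-\delta$ of the singularity split. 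Reassuringly, the two computations produce the identical exponent $\beta(4+2\alpha-\tfrac4p)-1$ in the limits $\delta\downarrow0$, $\sigma\uparrow1$, hence exactly the constraint \eqref{eq:alpha}: this is no accident, since the embedding cost of $1-\frac2p$ derivatives ($N^{\beta(1-2/p)}$ on the dilated kernel) matches the ratio of the $\LL^p$ and $\LL^2$ scalings of $V^N$. Your route buys a self-contained $\LL^p$ argument (and would survive in situations where the embedding step is lossy) at the price of heavier Banach-space stochastic machinery; the paper's route needs only elementary Hilbert-space BDG. One point worth spelling out in a final write-up, which both you and the paper leave tacit: because the integrand contains $e^{(t-s)A}$, the noise term is a stochastic convolution, not a martingale in $t$, so the BDG/square-function bound must be applied, for each fixed $t$, to the martingale $s\mapsto\int_0^s e^{(t-r)A}(\cdots)\,dW_r$ with $t$ frozen.
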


\begin{proposition}\label{prop:bound2}We assume Assumption  \ref{assump}. Let $\gamma  \in(0,\frac{1}{2})$ and $\mm{q'}\geqslant 2$.  There exists a positive constant $C_{T,M,\nu,\mm{q'}}$ such that, for
any $N\in\mathbb{N}$, it holds:
\begin{equation}
\EE\Bigg[ \int_{0}^{T}\int_{0}^{T}  \frac{ \big\|g_t^{N}-g_s^{N}\big\|_{-2,2}^{\mm{q'}}}{|t-s|^{1+ \mm{q'}\gamma}}      \; ds \; dt \Bigg]  \leq C_{T, M, \nu, \mm{q'}}.
\label{eq:secondestim} \end{equation}
\end{proposition}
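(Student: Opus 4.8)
The plan is to reduce \eqref{eq:secondestim} to a single pointwise-in-time moment estimate and then to establish the latter through the mild formulation \eqref{equatiog1h}. Since the integrand is nonnegative, Tonelli's theorem lets me bring the expectation inside the double integral, so it suffices to exhibit some $\gamma' \in (\gamma,\frac12)$ and a constant $C_{T,M,\nu,q'}$, \emph{uniform in $N$}, with
\[
\EE\big[ \|g_t^N - g_s^N\|_{-2,2}^{q'}\big] \leqslant C_{T,M,\nu,q'}\, |t-s|^{q'\gamma'}, \qquad 0\leqslant s\leqslant t\leqslant T.
\]
Indeed, the integrand in \eqref{eq:secondestim} is then bounded by $C|t-s|^{-1+q'(\gamma'-\gamma)}$, whose exponent exceeds $-1$; as $|t-s|\leqslant T$, the double integral over $[0,T]^2$ converges. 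Choosing $\gamma' \in (\gamma,\frac12)$ is possible precisely because $\gamma<\frac12$.

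To prove the moment estimate I would subtract \eqref{equatiog1h} at times $t$ and $s$ and split $g_t^N-g_s^N$ into the initial term $(e^{tA}-e^{sA})g_0^N$, the drift term, and the martingale term. The argument rests on a few elementary Fourier-multiplier facts on $\HH_2^\sigma(\RR^2)$: the commutation $e^{\tau A}\nabla=\nabla e^{\tau A}$ together with $\|\nabla f\|_{\sigma-1,2}\leqslant\|f\|_{\sigma,2}$; the increment bound $\|(e^{\tau A}-\mathrm I)f\|_{\sigma,2}\leqslant C\nu^\theta\tau^\theta\|f\|_{\sigma+2\theta,2}$ for $\theta\in[0,1]$; the smoothing estimate \eqref{eq:etoile} with $p=2$; and the contraction property of $e^{\tau A}$. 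Two further inputs are the embedding $\LL^1(\RR^2)\hookrightarrow\HH_2^{-2}(\RR^2)$ (dual to $\HH_2^2(\RR^2)\hookrightarrow C_b(\RR^2)$ in dimension two), and the scaling bound $\|V^N\|_{\sigma,2}\leqslant N^\beta\|V\|_{\LL^2}$ valid for every $\sigma\leqslant0$, which follows from $\widehat{V^N}(\zeta)=\widehat V(\zeta/N^\beta)$ and the change of variables $\eta=\zeta/N^\beta$.

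For the drift term I would use that $\Phi_r^N:=V^N\ast(F(K\ast g_r^N)S_r^N)$ satisfies $\|\Phi_r^N\|_{\LL^1}\leqslant\sqrt2\,M$ \emph{pathwise}, since $\|F\|_{\LL^\infty}\leqslant M$ while $S_r^N$ and $V^N$ are a probability measure and a probability density, hence $\|\Phi_r^N\|_{-2,2}\leqslant CM$. Writing the drift increment as $\int_s^t\nabla\cdot e^{(t-r)A}\Phi_r^N\,dr+\int_0^s\nabla\cdot(e^{(t-r)A}-e^{(s-r)A})\Phi_r^N\,dr$ and moving one derivative onto the semigroup, the first piece is bounded via \eqref{eq:etoile} by $CM\int_s^t(t-r)^{-1/2}dr=CM|t-s|^{1/2}$, and the second, after factoring $e^{(t-r)A}-e^{(s-r)A}=(e^{(t-s)A}-\mathrm I)e^{(s-r)A}$ and combining the increment and smoothing bounds, by $CM|t-s|^\theta\int_0^s(s-r)^{-(1+2\theta)/2}dr\leqslant C_TM|t-s|^\theta$ for any $\theta<\frac12$. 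For the initial term I would use the increment bound to get $\|(e^{tA}-e^{sA})g_0^N\|_{-2,2}\leqslant C|t-s|^\theta\|g_0^N\|_{-2+2\theta,2}$ and then control $\|g_0^N\|_{-2+2\theta,2}$ by interpolating between $\HH_2^{-2}$ (where $\|g_0^N\|_{-2,2}\leqslant C\|g_0^N\|_{\LL^1}=C$) and $\HH_2^{\alpha-1+2/p}$ (reached from $\HH_p^\alpha$ by Sobolev embedding), whose moments are finite by \eqref{initial cond}; since $\alpha-1+2/p>-1$, every $\theta<\frac12$ is admissible.

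The main obstacle is the martingale term $M_t^N-M_s^N$, which both caps the attainable exponent at $\frac12$ and demands care with the stochastic convolution. For fixed $t$ (resp.\ $s$) the relevant stochastic integral is the terminal value of a genuine $\HH_2^{-2}$-valued martingale in the integration variable, so the Burkholder--Davis--Gundy inequality applies and, using the independence of the $\{W^i\}$, controls the $q'$-th moment by the $\frac{q'}{2}$-th moment of the quadratic variation $\frac{2\nu}{N^2}\sum_i\int\|\cdots\|_{-2,2}^2\,dr$. Splitting again into an $\int_s^t$ piece and an $\int_0^s$ piece, I would bound $\|e^{(t-r)A}\nabla V^N(\cdot-X_r^{i,N})\|_{-2,2}\leqslant C\|V^N\|_{-1,2}\leqslant CN^\beta$ for the first and $\|(e^{(t-s)A}-\mathrm I)e^{(s-r)A}\nabla V^N(\cdot-X_r^{i,N})\|_{-2,2}\leqslant C|t-s|^\theta\|V^N\|_{-1+2\theta,2}\leqslant CN^\beta|t-s|^\theta$ for the second. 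The decisive cancellation is $\frac{1}{N^2}\sum_{i=1}^N N^{2\beta}=N^{2\beta-1}\leqslant1$, which holds exactly because $\beta<\frac12$ (indeed $\beta<\frac14$); this yields the bounds $C|t-s|^{q'/2}$ and $C|t-s|^{q'\theta}$ after integrating in $r$. Collecting the three contributions gives the moment estimate for any $\gamma'<\frac12$, hence \eqref{eq:secondestim}. The exponent cannot reach $\frac12$ since the increment estimate requires $\theta<\frac12$ (Brownian scaling), consistently with the hypothesis $\gamma<\frac12$.
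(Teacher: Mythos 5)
Your argument is correct in outline, but it takes a genuinely different route from the paper. The paper never passes to the mild form here: it subtracts the pre-mild identity \eqref{equagn} at times $s$ and $t$, applies H\"older in time to the drift and Laplacian integrals, and bounds each integrand in $\HH_2^{-2}$ --- the term $\nu\int_s^t \Delta g_r^N\,dr$ via $\|\Delta g_r^N\|_{-2,2}\le C\|g_r^N\|_{0,2}$ and the drift via $\|(S_r^N F(K\ast g_r^N))\ast V^N\|_{-1,2}\le C_M\|g_r^N\|_{0,2}$, where the $q'$-moments of $\|g_r^N\|_{0,2}$ come from Proposition \ref{prop:bound1} combined with $\LL^1$--$\LL^p$ interpolation (the bound \eqref{eq:new}); the martingale term is treated essentially as you treat your $\int_s^t$ piece (BDG plus $\|V^N\|_{0,2}=N^{\beta}\|V\|_{\LL^2}$, so the quadratic variation is $\le CN^{2\beta-1}(t-s)\le C(t-s)$), and the paper does not need the semigroup-increment factorization at all since no stochastic convolution appears. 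Your mild-formulation route, starting from \eqref{equatiog1h}, trades the Laplacian term for semigroup smoothing: the pathwise bound $\|\Phi_r^N\|_{\LL^1}\le\sqrt{2}M$ together with $\LL^1(\RR^2)\hookrightarrow\HH_2^{-2}(\RR^2)$ makes your drift estimate independent of Proposition \ref{prop:bound1}, at the cost of the factorization $e^{(t-r)A}-e^{(s-r)A}=(e^{(t-s)A}-\mathrm{I})e^{(s-r)A}$, the restriction $\theta<\tfrac12$, and an initial-data term $(e^{tA}-e^{sA})g_0^N$ which simply does not arise in the paper's proof. Both routes yield H\"older exponent arbitrarily close to $\tfrac12$ in the time increment, which is all that is needed since $\gamma<\tfrac12$; your reduction of \eqref{eq:secondestim} to the moment estimate via Tonelli is the same bookkeeping the paper performs when dividing by $|t-s|^{1+q'\gamma}$ at the end.

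There is one incorrect step: the claim that $\HH_2^{\alpha-1+2/p}$ is ``reached from $\HH_p^\alpha$ by Sobolev embedding'' is false on $\RR^2$ for $p>2$. Sobolev embeddings on an unbounded domain can only increase the integrability index, never decrease it: a wide bump $f_R=\phi(\cdot/R)$ has $\|f_R\|_{\alpha,p}\sim R^{2/p}$ while $\|f_R\|_{s,2}\sim R$ for \emph{every} fixed $s\in\RR$, so no embedding $\HH_p^{\alpha}(\RR^2)\subset\HH_2^{s}(\RR^2)$ can hold. Fortunately the fact you need is true and cheaper than your interpolation: since $g_0^N=V^N\ast S_0^N$ is a probability density, $|\widehat{g_0^N}(\zeta)|\le\|g_0^N\|_{\LL^1}=1$, hence
\begin{equation*}
\|g_0^N\|_{\sigma,2}^2\le\int_{\RR^2}\bigl(1+|\zeta|^2\bigr)^{\sigma}\,d\zeta=C_\sigma<\infty
\qquad\text{deterministically, for every }\sigma<-1,
\end{equation*}
and $-2+2\theta<-1$ for all $\theta<\tfrac12$, so your initial term is uniformly bounded with no interpolation and no use of \eqref{initial cond}. (Alternatively, interpolate between $\HH_2^{-2}$, controlled by $\|g_0^N\|_{\LL^1}$, and $\HH_2^{0}=\LL^2$, whose moments follow from $\LL^1$--$\LL^p$ interpolation and \eqref{initial cond}, exactly as in \eqref{eq:new}.) With this one-line repair your proof is complete and uniform in $N$, the decisive points --- the pathwise $\LL^1$ control of the drift by $M$, the cancellation $N^{2\beta-1}\le1$ in the quadratic variation, and the cap $\theta<\tfrac12$ --- all being sound.
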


The proofs of Proposition \ref{prop:bound1} and Proposition \ref{prop:bound2} are postponed to Section \ref{ssec:proof1} and Section \ref{ssec:proof2}, respectively.

\subsection{Criterion of compactness} 

\label{subsect compactness}

In this  subsection we follow  the arguments of  \cite[Section 3.1]{flan3}.  We start by constructing a space on which the sequence of the probability laws of $g_\cdot^N$ is tight. 

\bigskip

We exploit Corollary 9 of  Simon \cite{Simon}, using as far as possible the notations of that paper. Given 
a ball $\mathcal{B}_R \subset \mathbb{R}^{2}$, taken $  \mm{\alpha>\frac2p}$ (as in Theorem \ref{Thm 1}), and $\mm{\frac2p}<\eta<\alpha$, we consider the space 
\[
X:= \mathbb{W}^{\alpha, \mm{p}}(\mathcal{B}_R), \qquad  B:= \mathbb{W}^{\eta, \mm{p}}(\mathcal{B}_R),\qquad Y:= \mathbb{W}^{-2, 2}(\mathcal{B}_R).
\]
One can check that $X\subset B\subset Y $,
with compact dense embedding.  Moreover, we have the interpolation inequality: for any    $f\in X $
\[
\| f\|_{B}\leq C_{R} \;  \| f \|_{X}^{1-\theta} \;  \| f \|_{Y}^{\theta}
\]
with \begin{equation}
\theta:= \frac{\alpha-\eta}{2+\alpha}.  \label{eq:theta}
\end{equation}
Now taking, in the notations of \cite[Corollary 9]{Simon}: $s_0:=0$, $s_{1}:=\gamma\in(0,\frac12)$, and choosing $\mm{q,q'}\geqslant 2$ such that : 
 \begin{equation}s_1\mm{q'}=\gamma \mm{q'} >1, \qquad \text{and} \qquad s_{\theta}:=\theta s_{1} = \theta \gamma > \frac{1-\theta}{\mm{q}}+ \frac{\theta}{\mm{q'}}, \label{eq:condition}\end{equation} then
the corollary tells us  that   the space $\LL^{\mm{q}}([0, T]\; ;\; X)\cap \mathbb{W}^{\gamma, \mm{q'}}([0,T] \; ; \; Y) $
is relatively compact in $C([0,T] \; ; \; B)$.

 Therefore, for any $\gamma \in (0,\frac12)$, for the parameter $\alpha>\mm{\frac2p}$ given by point \textit{3.} of Assumption \ref{assump}, and for $\mm{q,q'} \geqslant 2$ which satisfy \eqref{eq:condition},  we now consider the space
\[
\mathfrak{Y}_{0}:=
\LL^{\mm{q}}\big(  [0,T]\; ;\; \HH_{\mm{p}}^{\alpha} \big)  \cap
\mathbb{W}^{\gamma,\mm{q'}}\big(  [0,T]\; ; \; \HH_{2}^{-2} \big)  .
\]
We use the Fr\'{e}chet topology on $C\big( [ 0,T]\; ;\; \mathbb{W}_{\rm loc}^{\eta,\mm{p}}(  \mathbb{R}^{2} ) \big)  $ defined as
\[
d\left(  f,g\right)  =\sum_{n=1}^{\infty}2^{-n}\bigg(  1\wedge  \sup_{t\in [0,T]} \big\Vert (f-g) (  t,\cdot )  \big\Vert _{\mathbb{W}^{\eta
,\mm{p}} (  \mathcal{ B}_n   )  }^{2}\bigg),
\]
where $\wedge$ denotes the infimum. From the above, we conclude that $\mathfrak{Y}_0$ is compactly embedded into  $C\big( [ 0,T]\; ;\; \mathbb{W}_{\rm loc}^{\eta,\mm{p}}  \big)$ for any $\mm{\frac2p}<\eta< \alpha$. 
Finally, let us denote  by $\LL_{w}^{2}$ the  spaces
  $\LL^{2}$ endowed with the weak topology. We obtain that
$\mathfrak{Y}_{0}$ is compactly embedded into%
\begin{equation}
\mathfrak{Y}:= \LL_{w}^{2}\big( [ 0,T]\; ;\; \HH_{\mm{p}}^{\alpha}  \big)
\cap C(  [0,T]\; ;\; \mathbb{W}_{\rm loc}^{\eta,\mm{p}} \big). \label{topology of convergence}%
\end{equation}
Note that%
\[
C\big( [ 0,T]\; ;\; \mathbb{W}_{\rm loc}^{\eta,\mm{p}}
\big)  \subset C(  [0,T]\; ;\; C(  D)  \big)
\]
for every regular bounded domain $D\subset\mathbb{R}^{2}$.

\bigskip

 Let us now go back to the sequence of processes $\{g_\cdot^N\}_N$, for which we have proved several estimates. The  Chebyshev inequality ensures that 
\[
\PP\big( \|g_{\cdot}^{N} \|_{\mathfrak{Y}_{0}}^{2} > R\big)\leq \frac{\EE \big[ \big\| g_{\cdot}^{N}\big\|_{\mathfrak{Y}_{0}}^{2}\big]}{R}, \qquad \text{for any } R>0.
\]
Thus by Proposition \ref{prop:bound1} and Proposition \ref{prop:bound2} (since $\mm{q,q'} \geqslant 2$), we obtain 
\[
\PP\big( \big\|g_{\cdot}^{N} \big\|_{\mathfrak{Y}_{0}}^{2} > R\big)\leq \frac{C}{R}, \qquad \text{for any } R>0, N \in \NN.
\]
The process $t \in [0,T] \mapsto g_{t}^{N}$ defines a probability $\mm{\mathbf{P}}_{N}$ on $\mathfrak{Y}$. Fix $\varepsilon>0$. Last inequality implies that there exists a bounded set $B_{\mm{\varepsilon}}\in \mathfrak{Y}_{0}$ 
such that $\mm{\mathbf{P}}_{N}(B_{\mm{\varepsilon}})< 1-\mm{\varepsilon}$ for all $N$, and therefore, from the previous argument, there exists a compact set $
K_{\mm{\varepsilon}}\subset \mathfrak{Y}$ such  that $\mm{\mathbf{P}}_{N}(K_{\mm{\varepsilon}})< 1-\mm{\varepsilon}$.

\bigskip

Finally, denote by $\{  \texttt{L}^{N}\}_{N\in\mathbb{N}}$ the laws of the processes $\{
g^{N}\}  _{N\in\mathbb{N}}$ on $\mathfrak{Y}_{0}$,  we have proved that $\{
\texttt{L}^{N}\}  _{N\in\mathbb{N}}$ is tight in $\mathfrak{Y}$, hence relatively compact,
by Prohorov's Theorem. From every subsequence of $\{  \texttt{L}^{N}\}_{N\in\mathbb{N}}$ it is possible to extract a further subsequence which
converges to a probability measure $\texttt{L}$ on $\mathfrak{Y}$. Moreover  by a theorem of
Skorokhod (see \cite[Theorem 2.7]{Ikeda}), we are allowed, eventually after choosing a suitable probability space  where all our random variables can be defined, to assume
\begin{equation} \label{eq:lim}
g^{N} \rightarrow \ \xi \quad \text{in }   \mathfrak{Y}, \qquad \text{a.s.}
\end{equation}
where the law of $\xi$ is $\texttt{L}$.

\subsection{Passing to the limit}
\label{sec:pass}

Next step is to characterize the limit. 
First, recall formula  \eqref{equagn}, which reads
\begin{align*}
g_{t}^{N}  (x)    =
g_{0}^{N}  (x)&+\int_{0}^{t}\left\langle S_{s}%
^{N},F(K\ast g_{s}^{N}) \sca \; \nabla  V^{N}  \left(  x-\cdot\right)  \right\rangle ds\\
&   + \nu \int_{0}^{t}  \Delta g_{s}^{N} (x)ds+\frac{{\sqrt{2\nu}}}{N}\sum_{i=1}^{N}\int_{0}^{t}\nabla  V^{N}  \left(  x-X_{s}^{i,N}\right) \sca \;
dW_{s}^{i}.
\end{align*}
Taking a  test function $\phi: \RR^2 \to \RR$ we have 
\begin{align}
\left\langle g_{t}^{N}, \phi  \right\rangle   =
\left\langle  g_{0}^{N},  \phi \right\rangle &+\int_{0}^{t}\left\langle S_{s}%
^{N},F(K\ast g_{s}^{N}) \sca \; \nabla  (V^{N}\ast \phi)  \right\rangle ds\notag\\
&  + \nu \int_{0}^{t}  \left\langle g_{s}^{N}, \Delta \phi \right\rangle ds+
\frac{{\sqrt{2\nu}}}{N}\sum_{i=1}^{N}\int_{0}^{t}\nabla ( V^{N}\ast \phi) \left(X_{s}^{i,N}\right) \sca \;
dW_{s}^{i}. \label{eq:arg1}
\end{align}
It is clear from \eqref{eq:lim} that 
\begin{equation}\label{eq:arg2}
\left\langle g_{t}^{N}, \phi  \right\rangle  \xrightarrow[N\to\infty]{} \left\langle \xi  , \phi  \right\rangle , \qquad
\left\langle  g_{0}^{N},  \phi \right\rangle \xrightarrow[N\to\infty]{} \left\langle  \xi^{\rm ini},  \phi \right\rangle ,
\end{equation}
and  also that
\begin{align} \notag
\EE\Bigg[  \bigg|\frac{1}{N}\sum_{i=1}^{N}\int_{0}^{t}\nabla ( V^{N}\ast \phi) \left(X_{s}^{i,N}\right) \sca\;
dW_s^i\bigg|^{2}\bigg]
&=\frac{1}{N^{2}}\sum_{i=1}^{N}\int_{0}^{t}  \EE\Big[\big|\nabla ( V^{N}\ast \phi) \left(X_{s}^{i,N}\right)\big|^{2}\Big] 
ds \\
&\leq\frac{t}{N}  \|\nabla\phi\|_{\mathbb{L}^\infty}^{2}  \xrightarrow[N\to\infty]{} 0.\label{eq:arg3}
\end{align}
We now claim that 
\begin{equation}\label{eq:claim}
\lim_{N\rightarrow \infty}\int_{0}^{t}\big\langle S_{s}%
^{N}, F(K\ast g_{s}^{N})\sca\;\nabla  (V^{N}\ast\phi) \big\rangle \; ds= \int_{0}^{t} \int_{\RR^2} \xi(s,x) F(K\ast {\xi}) \sca \;\nabla \phi(x) \; dx  ds.
\end{equation}

\noindent \textbf{Proof of \eqref{eq:claim}.} 
First, we observe  that   $\big\|g_{t}^{N}\big\|_{\LL^1}=1$
 and  $g_{\cdot}^{N}$   is uniformly bounded  in $ \LL^{2}(  [0,T]\; ;\; \HH_{\mm{p}}^{\eta}) $ for any $\eta> \mm{\frac2p}$. 
Then by the Sobolev embeddings' Theorem (see \cite[Section 2.8.1]{Triebel}), we have that $g_{\cdot}^{N}$   is uniformly bounded in $\LL^{2}\big(  [0,T]\; ;\; C_{b}( \mathbb{R}) \big) $.

\bigskip

 By interpolation we also know that $g_{\cdot}^{N}\in \LL^{2}([0,T]\; ;\;  \LL^{\mm{a}}(\RR^{2}))$ for any $\mm{a} \in [1,+\infty]$, and there is a constant $C_{K}>0$ such that
\begin{equation}\label{k_{1}}
\big\|K\ast g_{t}^{N}\big\|_{\LL^\mm{a}}\leq C_{K}  \big\| g_{t}^{N}\big\|_{\LL^\mm{b}}
\end{equation}
with $\frac{1}{\mm{a}}= \frac{1}{\mm{b}}-\frac{1}{2}$ and $1< \mm{b}< 2$. 
The kernel $K$ is singular, of Calderon-Zygmund type. Hence, there exists
a constant $C>0$ such that, for any $\mm{a}\in(1,+\infty)$, 
\begin{equation}\label{k_{2}}
\big\| \nabla (K\ast g_{t}^{N})\big\|_{\LL^\mm{a}}\leq  C\big\| g_{t}^{N} \big\|_{\LL^\mm{a}}.
\end{equation}  
 Let us introduce $f_\cdot^{N}=K\ast g_{\cdot}^{N}$. 
By the Sobolev embeddings' Theorem,  estimates \eqref{k_{1}} and 
\eqref{k_{2}} imply  that: for any $\tilde\eta >0$ (take $\tilde\eta= 1- \frac{2}{\mm{a}}$  with $\mm{a}>2$),
 \[
\| f^{N}\big\|_{\LL^{2}([0,T] \; ; \; C^{\tilde\eta}(\RR^{2}))}\leq C. 
 \]
Now, let $\chi: \RR^2 \to [0,1] \in C_{0}^{\infty}$ be a \textit{cut-off function}, \textit{i.e.}~such that: \[0\leq \chi(x) \leq 1, \qquad \text{and} \qquad \chi(x)=1, \text{ if } |x|\leq 1.\] Let $\tilde{\chi}= 1- \chi$. 
We can decompose:
\begin{multline*}
\left( K\ast \big(g_{t}^{N}-\xi\big)\right)(x)\\=\int_{\RR^{2}} \chi(y) K(y)  (g_{t}^{N}-\xi)(x-y) dy + \int_{\RR^{2}} \tilde\chi(x-y) K(x-y)  (g_{t}^{N}-\xi)(y) dy.
\end{multline*}
We observe that $\chi(\cdot) K(\cdot)\in \LL^{\mm{c}}(\RR^2)$ with $\mm{c}<2$, and  $\tilde{\chi}(x-\cdot)K(x-\cdot)\in \LL^{\mm{d}}(\RR^2)$
with $\mm{d}>2$. Since $g_{\cdot}^{N}$ is uniformly bounded in   $\LL^{2}([0,T]\; ; \;\LL^{\mm{a}}(\RR^{2}))$ for all $\mm{a} \in [1,+ \infty]$ 
we  obtain that  $K\ast g_{t}^{N}$ converges to $K\ast \xi$. 

\bigskip

Finally, we can bound as follows:
\begin{multline*}
\Big| \Big\langle S_{s}^{N},F(K\ast g_{s}^{N})\sca\;\nabla  (V^{N}\ast\phi)  \Big\rangle
-\Big\langle g_{s}^{N},F(K\ast g_{s}^{N})\sca\;\nabla  (V^{N}\ast\phi)   \Big\rangle \Big|
\\
\leq \sup_{x\in \RR^{2} }\Big| F(K\ast g_{s}^{N})\sca\;\nabla  (V^{N}\ast\phi)(x)  
- \big( F(K\ast g_{s}^{N})\sca\;\nabla  (V^{N}\ast\phi) \big) \ast  V^{N} \big)(x) \Big|.
\end{multline*}
 Let us control the last term, using the facts that \begin{itemize} \item $V$ is a density (denoted below by $(\int V =1)$), 
 \item  $F$ is Lipschitz and bounded (denoted below by ($F\in$ Lip$\cap L^\infty$)), \item $V$ is compactly supported (denoted below by ($V$ is c.s.)),
 \item and $\phi$ is compactly supported and smooth, 
  \end{itemize}  as follows (the norm  $\|\cdot\|$ below is the Euclidean norm on $\RR^2$): for any $x\in\RR^2$,
\begin{align*}
  \Big| & F(K\ast g_{s}^{N})(x) \sca\;\nabla (V^{N}\ast\phi)(x)  
- \big( F(K \ast g_{s}^{N})\sca\;\nabla (V^{N}\ast\phi) \big) \ast V^{N} \big)(x) \Big| \vphantom{\Bigg\{}\\
&
 \overset{\substack{\hphantom{(F\in\mathrm{Lip}\cap L^\infty)}\\(\int V=1)}}{\leq}   \int_{\RR^2} V(y) \; \Big\|\nabla (V^{N}\ast\phi)(x)\Big\| \;
 \Big\|   F(K\ast g_{s}^{N})(x)- F(K\ast g_{s}^{N})\big(x -\tfrac{y}{N^{\beta}}\big) \Big\| dy \notag \\
 & \quad\; \qquad
 +    \int_{\RR^2} V(y)  \ \Big\|\nabla (V^{N}\ast\phi)(x)-\nabla(V^{N}\ast\phi)\big(x- \tfrac{y}{N^{\beta}} \big)  \Big\| \; 
\big\|(K\ast g_{s}^{N})(x)\big\|dy 
 \\ &
\overset{(F\in\mathrm{Lip}\cap L^\infty)}{\leq}  C \int_{\RR^2} V(y) \; \big\|\nabla (V^{N}\ast\phi)(x)\big\| \;
 \Big\| f_{s}^{N}(x)-f_{s}^{N}\big(x-\tfrac{y}{N^{\beta}} \big)\Big\| dy \\ 
 & \qquad \qquad 
 +  \frac{C}{N^{\beta}}  \int_{\RR^2} V(y) \|y\|     dy \\
& 
 \overset{\substack{\hphantom{(F\in\mathrm{Lip}\cap L^\infty)}\\(V \mathrm{\; is  \; c.s.})}}{\leq}   \frac{C}{N^{\tilde\eta \beta}}  \ \sup_{x,y\in \mathbf{K}} \frac{\big\| f_{s}^{N}(x)-f_{s}^{N}(y)\big\|}{\|x-y\|^{\tilde\eta}} \ \int_{\RR^2} V(y)  \|y\|^{\tilde\eta} \;  dy \\ 
 & \qquad  \qquad +   \frac{C}{N^{\beta}} \int_{\RR^2} V(y) \|y\|     dy ,
\end{align*}
where $\mathbf{K}\subset \RR^2$ is a compact set. 
Therefore we have obtained 
\[
  \Big|  F(K\ast g_{s}^{N})(x) \sca\;\nabla (V^{N}\ast\phi)(x)  
- \big( F(K \ast g_{s}^{N})\sca\;\nabla (V^{N}\ast\phi) \big) \ast V^{N} \big)(x) \Big|
\leq  \frac{C} {N^{\tilde\eta \beta}} ,
\]
where the constant $C$ depend on $\| f^{N}\|_{C^{\tilde\eta}(\RR^{2})}$. Thus,
\begin{align*}
\lim_{N\rightarrow \infty}\int_{0}^{t}  \Big\langle S_{s}%
^{N},F(K\ast g_{s}^{N}) \; & \sca\; \nabla  (V^{N}\ast\phi)   \Big\rangle \; ds \\
& =\lim_{N\rightarrow \infty}\int_{0}^{t}\Big\langle g_{s}%
^{N},F(K\ast g_{s}^{N}) \sca\; \nabla  (V^{N}\ast\phi)  \Big\rangle \; ds.
\\ & =
\lim_{N\rightarrow \infty}\int_{0}^{t}   \int_{\RR^2}  g_{s}^{N}(x)  F(K\ast g_{s}^{N})(x) \sca\; \nabla (V^{N}\ast\phi)\left(x\right)  \; dx ds \\
& =  \int_{0}^{t}  \int_{\RR^2}  \xi(s,x)  F(K\ast \xi(s,\cdot))(x) \sca\; \nabla \phi(x)  \; dx ds
\end{align*}
 where in the last equality we used that $g_s^{N}\rightarrow \xi$ strongly in 
$\LL^{2}\left( [ 0,T]\; ;\; C(  \RR^2)  \right)$. We have proved \eqref{eq:claim}. \qed

\bigskip

Therefore from \eqref{eq:arg1}, \eqref{eq:arg2}, \eqref{eq:arg3} and \eqref{eq:claim}, we conclude that the limit point $\xi$ is then a  weak solution  to the PDE 
\begin{equation}
\partial_{t}\xi+  \mathrm{div} \big(\xi \; F(K\ast \xi) \big) = \nu \Delta \xi, \qquad
\xi\big|_{t=0}= \xi^{\rm ini}. \label{PDEintrAUX}
\end{equation}
Note that there is one more step to recover \eqref{Vorty2} in Theorem \ref{Thm 1}, which will be achieved in Section \ref{sec:conclusion} below. Before that, we need to prove that the solution to \eqref{PDEintr2} is unique.

\subsection{Uniqueness of the solution}
\label{sec:unique}

\begin{theorem}
\label{Thm PDE} We assume that $\xi^{\rm ini}\in \LL^1 \cap \LL^{\infty}(\RR^2)$. Then there is at most one weak solution of the equation \eqref{PDEintr2}  which belongs to  $\LL^{2}\big( [ 0,T]\; ;\; \LL^{1}\cap \LL^\infty(\RR^{2})\big)$.  
\end{theorem}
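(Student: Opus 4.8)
The plan is to reduce uniqueness to a difference estimate carried out at the level of the velocity field, where the viscous dissipation can be played against the nonlinearity. Let $\xi_1,\xi_2$ be two weak solutions in $\LL^{2}([0,T];\LL^{1}\cap\LL^{\infty}(\RR^{2}))$ with the same initial datum $\xi^{\rm ini}$, and set $u_i:=K\ast\xi_i$, $w:=\xi_1-\xi_2$ and $v:=u_1-u_2=K\ast w$. Since $\xi_1u_1-\xi_2u_2=wu_1+\xi_2 v$, the vorticity difference solves $\partial_t w+\mathrm{div}(wu_1)+\mathrm{div}(\xi_2 v)=\nu\Delta w$, and equivalently the divergence-free velocity difference solves
\[\partial_t v+(u_1\cdot\nabla)v+(v\cdot\nabla)u_2=\nu\Delta v-\nabla\pi,\qquad v|_{t=0}=0,\]
where $\pi=p_1-p_2$. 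Because the Biot--Savart law is invertible ($w=\partial_1 v_2-\partial_2 v_1$), it suffices to prove $v\equiv0$ on $[0,T]$.

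First I would record the analytic ingredients. Each drift $u_i=K\ast\xi_i$ is bounded, $\|u_i\|_{\LL^\infty}\le c_K(\|\xi_i\|_{\LL^1}+\|\xi_i\|_{\LL^\infty})$ by \eqref{eq:ck}; the map $\xi\mapsto\nabla(K\ast\xi)$ is a Calder\'on--Zygmund operator, so $\|\nabla u_2\|_{\LL^2}\le C\|\xi_2\|_{\LL^2}$; and the $2d$ Ladyzhenskaya inequality gives $\|f\|_{\LL^4}^2\le C\|f\|_{\LL^2}\|\nabla f\|_{\LL^2}$. I would also use mass conservation: testing the weak formulation against functions increasing to $1$ (the flux $\xi_i(K\ast\xi_i)$ and $\nabla\xi_i$ lie in $\LL^1$, so the boundary contributions vanish) yields $\int_{\RR^2}\xi_i(t,\cdot)\,dx=\int_{\RR^2}\xi^{\rm ini}\,dx$, hence $\int_{\RR^2}w(t,\cdot)\,dx=0$. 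This vanishing of the total vorticity cancels the leading $O(|x|^{-1})$ behaviour of $v=K\ast w$ at infinity and is what makes $v$ belong to $\LL^2(\RR^2)$, so that the velocity energy is finite.

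With these at hand, the core is a single energy estimate. Multiplying the equation for $v$ by $v$ and integrating, the transport term $(u_1\cdot\nabla)v$ and the pressure drop out by $\mathrm{div}\,u_1=\mathrm{div}\,v=0$, leaving
\[\frac12\frac{d}{dt}\|v\|_{\LL^2}^2+\nu\|\nabla v\|_{\LL^2}^2=-\int_{\RR^2}(v\cdot\nabla)u_2\cdot v\,dx.\]
Bounding the right-hand side by $\|\nabla u_2\|_{\LL^2}\|v\|_{\LL^4}^2$ and invoking the Calder\'on--Zygmund and Ladyzhenskaya estimates gives $\le C\|\xi_2\|_{\LL^2}\|v\|_{\LL^2}\|\nabla v\|_{\LL^2}$, and a Young inequality absorbs $\tfrac\nu2\|\nabla v\|_{\LL^2}^2$ into the left-hand side. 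One is left with $\frac{d}{dt}\|v\|_{\LL^2}^2\le C_\nu\|\xi_2\|_{\LL^2}^2\|v\|_{\LL^2}^2$, where $s\mapsto\|\xi_2(s)\|_{\LL^2}^2\le\|\xi_2(s)\|_{\LL^1}\|\xi_2(s)\|_{\LL^\infty}$ is integrable on $[0,T]$. Since $v(0)=0$, Gronwall's lemma forces $v\equiv0$, hence $w\equiv0$ and $\xi_1=\xi_2$.

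The main obstacle is rigour rather than the formal computation. The energy identity is only formal for merely weak solutions, so I would first justify it by regularising (parabolic smoothing makes bounded-vorticity solutions smooth for $t>0$, and one performs the estimate on $[\delta,T]$ before letting $\delta\to0^+$ using $v\to0$ as $t\to0^+$), and by a suitable cut-off argument to legitimise the integrations by parts. The genuinely delicate point is the finiteness and continuity of $t\mapsto\|v(t)\|_{\LL^2}$: one must turn the zero-mean property $\int w=0$ into quantitative decay of $v=K\ast w$ at infinity, which requires controlling a slightly finer than $\LL^1\cap\LL^\infty$ quantity of $w$ (a first-moment or low-frequency bound). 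Establishing this square-integrability of the velocity difference within the stated solution class is where I would concentrate the technical effort.
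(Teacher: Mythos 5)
Your overall strategy --- passing to the velocity difference $v=K\ast w$ and running a $2d$ energy estimate with Ladyzhenskaya and Calder\'on--Zygmund --- is classical and the formal computation is correct, but it contains a genuine gap that you yourself flag and never close: the finiteness of $\|v(t)\|_{\LL^2(\RR^2)}$. This is not a technical afterthought; it is the step on which the whole argument stands, and it does \emph{not} follow from the hypotheses of the theorem. The zero-mean property $\int_{\RR^2} w(t,\cdot)\,dx=0$ improves the far-field decay of $K\ast w$ from $O(|x|^{-1})$ to $O(|x|^{-2})$ only if one has a first-moment bound $\int |y|\,|w(t,y)|\,dy<\infty$, and the solution class $\LL^{2}([0,T];\LL^{1}\cap\LL^{\infty}(\RR^2))$ gives no control whatsoever on moments (equivalently, in Fourier variables, $\widehat{w}(t,0)=0$ with $\widehat{w}$ merely continuous does not make $|\widehat{w}(\xi)|^2/|\xi|^2$ integrable near $\xi=0$ in dimension $2$; the divergence is logarithmic). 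Without $v(t)\in\LL^2$, the quantity you apply Gronwall to may be infinite, and the pressure elimination and integrations by parts in the energy identity are also unjustified. Known repairs (Yudovich-type arguments, weighted or homogeneous-space estimates, or the relative-energy tricks of Kato/Gallay--Gallagher type) exist but require real work or extra hypotheses, so as written the proof does not establish the theorem in the stated class. The regularization/cut-off issues you defer in the last paragraph are more routine, but they too are only sketched.

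It is instructive to contrast this with the paper's proof, which avoids the velocity variable entirely and hence the infinite-energy issue. There, one mollifies the weak formulation, writes the difference $X=\xi^1-\xi^2$ in \emph{mild} form through the heat semigroup $\mathfrak{S}(t)=e^{tA}$, and uses the smoothing bound $\|\nabla \mathfrak{S}(t-s)\|\leq C_\nu (t-s)^{-1/2}$ in both $\LL^\infty$ and $\LL^1$, together with the boundedness and Lipschitz continuity of $F$ and the kernel bound \eqref{eq:ck}, to obtain
\[
\| X (t)\|_{\LL^{1}\cap \LL^{\infty}}\leq C \int_{0}^{t} (t-s)^{-1/2}\,\big(\Vert \xi^{1}\Vert_{\LL^{1}\cap \LL^{\infty}}+\Vert \xi^{2}\Vert_{\LL^{1}\cap \LL^{\infty}}\big)\, \| X(s) \|_{\LL^{1}\cap \LL^{\infty}} \, ds,
\]
which closes by a singular Gronwall lemma using only the norms the solution class actually provides. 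If you want to salvage your route, the effort should go exactly where you located it --- proving $v\in\LL^2$ --- but note that the semigroup contraction in $\LL^1\cap\LL^\infty$ sidesteps that obstacle at essentially no cost, which is why the paper takes it.
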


\begin{proof} For any function $u:[0,T] \times \RR^2 \to \RR$ we introduce the notation \[\left\| u  \right\|_{\LL^{1}\cap \LL^{\infty}} := \| u(t,\cdot) \|_{\LL^1} + \| u(t,\cdot) \|_{\LL^\infty} .\] 
Let $\xi^{1},\xi^{2}$ be two weak solutions of  \eqref{PDEintr2},
 with the same initial condition $\xi^{\rm ini}$ which satisfies $\xi \in \LL^1 \cap \LL^{\infty}(\RR^2)$.
By hypothesis,  from \cite{Ben}, $\xi^{1}, \xi^{2} \in \LL^{2}(  [0,T]\; ;\; \LL^\infty) $.

 Let $\{h_{\varepsilon}\}_{\varepsilon}$ be a family of standard symmetric mollifiers  on $\RR^2$. For any
$\varepsilon>0$ and $x\in\mathbb{R}^{2}$ we can use $h_{\varepsilon
}(x-\cdot)$ as a test function in the equation \eqref{limiteq}.  Therefore we set
$\xi_{\varepsilon}^{i}(t,x)=(\xi^{i}(t,\cdot)\ast h_{\varepsilon})(x)$ for
$i=1,2$. Then we have, for any $(t,x) \in [0,T] \times \RR^2$, %
\[
\xi_{\varepsilon}^{i}(t,x)=(\xi^{\rm ini}\ast h_{\varepsilon})(x)+  \nu \int_{0}^{t} \Delta
\xi_{\varepsilon}^{i}(s,x)\,ds+\int_{0}^{t}\big((\nabla h_{\varepsilon}\sca \; F(K\ast \xi^{i})) \ast \xi^{i}\big)(s,x)  \,ds.
\]
Writing this identity in mild form we obtain (writing with a little abuse of notation $\xi^{i}\left(  t\right)
$ for the function $\xi^{i}\left(  t,\cdot\right)  $ and $\mathfrak{S}(t)$ for $e^{tA}$): 
\[
\xi_{\varepsilon}^{i}(t)=\mathfrak{S}(t)\sca\; (\xi^{\rm ini}\ast h_{\varepsilon})+\int_{0}%
^{t} \mathfrak{S}(t-s)\sca \; \big((\nabla h_{\varepsilon}\sca \; F(K\ast \xi^{i})) \ast \xi^{i} \big)(s)  ds.
\]
The function $X=\xi^{1}-\xi^{2}$ satisfies 
\[
h_{\varepsilon}\ast X(t)=\int_{0}^{t} \nabla \mathfrak{S}(t-s) \sca  \Big(  h_{\varepsilon}\ast \Big( F(K\ast \xi^{1}) \xi^{1}- F(K\ast \xi^{2}) \xi^{2}\Big)\Big) \,ds.
\]
Thus we obtain 
\[
\|h_{\varepsilon}\ast X(t)\|_{\LL^{\infty}}\leq 
\int_{0}^{t} \Big\|  \nabla \mathfrak{S}(t-s) \sca \Big( h_{\varepsilon}\ast \Big( F(K\ast \xi^{1}) \xi^{1}- F(K\ast \xi^{2}) \xi^{2}\Big) \Big) \Big\|_{\LL^{\infty}} \,ds.
\]
Therefore, there is a constant $C_\nu >0$ such that
\[
\|h_{\varepsilon}\ast X(t)\|_{\LL^{\infty}}\leq 
\int_{0}^{t}  \frac{{C_\nu}}{(t-s)^{\frac{1}{2}}} \; \Big\|   h_{\varepsilon}\ast \Big( F(K\ast \xi^{1}) \xi^{1}- F(K\ast \xi^{2}) \xi^{2}\Big)  \Big\|_{\LL^{\infty}} \,ds.
\]
Taking the limit as $\varepsilon\rightarrow0$ we  arrive  at
\[
\| X(t)\|_{\LL^{\infty}}\leq 
\int_{0}^{t}  \frac{{C_\nu}}{(t-s)^{\frac{1}{2}}}  \; \Big\|   F(K\ast \xi^{1}) \xi^{1}- F(K\ast \xi^{2})\xi^{2} \Big\|_{\LL^{\infty}} \,ds.
\]
With similar arguments we have  the same estimate in $\LL^1$--norm as follows:
\[
\| X(t)\|_{\LL^{1}}\leq 
\int_{0}^{t}  \frac{{C'_\nu}}{(t-s)^{\frac{1}{2}}} \;  \Big\|   F(K\ast \xi^{1}) \xi^{1}- F(K\ast \xi^{2}) \xi^{2} \Big\|_{\LL^{1}} \,ds.
\]
By easy calculation we have 
\begin{align*}
\| X(t)\|_{\LL^{\infty}} & \leq 
\int_{0}^{t}  \frac{{C_\nu}}{(t-s)^{\frac{1}{2}}}  \Big( \big\|  X  F(K\ast \xi^{1})  \big\|_{\LL^{\infty}} + \big\|  \xi^{2} \big(F(K\ast \xi^{1})- F(K\ast \xi^{2})\big) \big\|_{\LL^{\infty}} \Big)\,ds.
\\ & 
 \leq {C_\nu} \int_{0}^{t}  \frac{\left\| \xi^{1} \right\|_{\LL^{1}\cap \LL^{\infty}}}{(t-s)^{\frac{1}{2}}}  \big\|  X \big\|_{\LL^{\infty}}  \; ds   +   {C_\nu}\int_{0}^{t}  \frac{\left\| \xi^{2} \right\|_{\LL^{1}\cap \LL^{\infty}}}{(t-s)^{\frac{1}{2}}}  \big(  \big\| X  \big\|_{\LL^{\infty}}  +     \big\| X \big\|_{\LL^{1}} \big) ds 
\\ & 
\leq  {C_\nu}  \int_{0}^{t}  \frac{\left\| \xi^{{1}} \right\|_{\LL^{1}\cap \LL^{\infty}}+ \left\| \xi^{2} \right\|_{\LL^{1}\cap \LL^{\infty}}}{(t-s)^{\frac{1}{2}}} \ \big( \big\| X  \big\|_{\LL^{\infty}}  + \big\| X \big\|_{\LL^{1}} \big) ds.  % +   \int_{0}^{t}  \frac{\left\| \xi^{{1}} \right\|_{\LL^{1}\cap \LL^{\infty}}+ \left\| \xi^{2} \right\|_{\LL^{1}\cap \LL^{\infty}}}{(t-s)^{\frac{1}{2}}} \   \big\| X \big\|_{\LL^{1}} ds\bigg).
\end{align*}
On the other hand, in a similar way we have 
\begin{align*}
\| X(t)\|_{\LL^{1}} & \leq  {C'_\nu}
 \int_{0}^{t}  \frac{1}{(t-s)^{\frac{1}{2}}} \Big(  \big\|  X  F(K\ast \xi^{1})  \big\|_{\LL^{1}}  +  \big\|  \xi^{2} \big(F(K\ast \xi^{1})-F(K\ast \xi^{2}) \big) \big\|_{\LL^{1}} \Big) \,ds.
\\ 
%& 
%\leq C_{\left\| u^{1}  \right\|_{L^{1}\cap L^{\infty}}} \int_{0}^{t}  \frac{1}{(t-s)^{\frac{1}{2}}}  \big\|  U \big\|_{\LL^{1}}  \ ds  + \ 
%  C_{\left\| u^{2}  \right\|_{L^{1}\cap L^{\infty}}, \left\| \xi^{2}  \right\|_{L^{1}\cap L^{\infty}} )}  \int_{0}^{t}  \frac{1}{(t-s)^{\frac{1}{2}}}  \  \big\| U  \big\|_{\LL^{\infty}}  +     \big\| U \big\|_{\LL^{1}} ds 
%\\
 & 
\leq   {C'_\nu} \int_{0}^{t}  \frac{\left\| \xi^{{1}} \right\|_{\LL^{1}\cap \LL^{\infty}}+ \left\| \xi^{2} \right\|_{\LL^{1}\cap \LL^{\infty}}}{(t-s)^{\frac{1}{2}}} \ \big( \big\| X \big\|_{\LL^{\infty}}  + \big\| X \big\|_{\LL^{1}}  \big) ds. % +  {C'_\nu} \int_{0}^{t}  \frac{\left\| \xi^{{1}} \right\|_{\LL^{1}\cap \LL^{\infty}}+ \left\| \xi^{2} \right\|_{\LL^{1}\cap \LL^{\infty}}}{(t-s)^{\frac{1}{2}}}  \   \big\| X \big\|_{\LL^{1}} ds .
\end{align*} 
  Therefore we have, for a constant $C''_\nu >0$, that 
\[
\| X (t)\|_{\LL^{1}\cap \LL^{\infty}}\leq {C''_\nu}  \int_{0}^{t}  \frac{\left\| \xi^{{1}} \right\|_{\LL^{1}\cap \LL^{\infty}}+ \left\| \xi^{2} \right\|_{\LL^{1}\cap \LL^{\infty}}}{(t-s)^{\frac{1}{2}}} \;  \| X(s) \|_{\LL^{1}\cap \LL^{\infty}} \; ds. 
\]
By Gronwall's Lemma we conclude $X=0$.
\end{proof}

\subsection{Convergence in probability}

\label{sec:conv}

\begin{corollary}\label{coroconveP2} The sequence $\{g^{N}\}_{N\in\NN}$ converges in probability to $\xi$.
\end{corollary}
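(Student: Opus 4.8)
The plan is to combine the subsequential tightness of Section \ref{subsect compactness} with the uniqueness result of Theorem \ref{Thm PDE}, exploiting the decisive fact that the limiting equation \eqref{PDEintr2} is \emph{deterministic}: once every subsequential limit in law has been identified with the same non-random solution $\xi$, the convergence in law of $\{g^N\}_N$ upgrades automatically to convergence in probability.

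First I would recall that, by the tightness established in Section \ref{subsect compactness} and Prohorov's Theorem, every subsequence of the laws $\{\texttt{L}^N\}_N$ admits a further subsequence converging weakly on $\mathfrak{Y}$ to some limit law $\texttt{L}$. Through the Skorokhod representation \eqref{eq:lim} and the passage to the limit of Section \ref{sec:pass}, any $\xi$ with law $\texttt{L}$ is a weak solution of \eqref{PDEintrAUX} with the deterministic datum $\xi^{\rm ini}$. Since $\|g^N_t\|_{\LL^1}=1$ and, by Proposition \ref{prop:bound1} together with the embedding $\HH_p^\alpha\hookrightarrow C_b$ (valid for $\alpha>\tfrac2p$), the limit satisfies $\xi\in\LL^2([0,T];\LL^1\cap\LL^\infty)$, the a priori estimate \eqref{eq:ck} and the choice of $M$ in \eqref{eq:assM} give $\|K\ast\xi\|_{\LL^\infty}\leqslant M$, whence $F(K\ast\xi)=K\ast\xi$ and $\xi$ in fact solves \eqref{PDEintr2} inside the uniqueness class of Theorem \ref{Thm PDE}. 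That theorem then forces $\xi$ to coincide with the single \emph{deterministic} solution of \eqref{PDEintr2}, so that $\texttt{L}=\delta_\xi$ irrespective of the subsequence chosen.

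Consequently every subsequence of $\{\texttt{L}^N\}_N$ has a further subsequence converging to the \emph{same} Dirac mass $\delta_\xi$, and therefore the full sequence $\texttt{L}^N$ converges weakly to $\delta_\xi$. It then remains to use that weak convergence to a Dirac mass implies convergence in probability: for each $\varepsilon>0$ the set $\{f:\mathrm{dist}(f,\xi)\geqslant\varepsilon\}$ is closed and avoids $\xi$, so the Portmanteau theorem yields $\PP(\mathrm{dist}(g^N,\xi)\geqslant\varepsilon)\to0$, which is precisely $g^N\to\xi$ in probability on the original probability space. The main conceptual step, and the reason the previous sections are all needed, is exactly this identification of every subsequential limit with the unique deterministic solution; the only genuine technical subtlety is that the first factor of $\mathfrak{Y}$ carries the non-metrizable weak-$\LL^2$ topology, which is handled by restricting to the balls of $\LL^2([0,T];\HH_p^\alpha)$ provided by Proposition \ref{prop:bound1}, on which the weak topology is metrizable so that the Portmanteau argument and the distance to $\xi$ are well defined.
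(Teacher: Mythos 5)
Your route is valid and genuinely different from the paper's. The paper proves this corollary via the Gy\"ongy--Krylov criterion: it considers the joint laws $\nu^{N,M}$ of the pairs $(g^N,g^M)$, shows tightness of this family in $\mathfrak{Y}\times\mathfrak{Y}$, uses Skorokhod representation plus Theorem \ref{Thm PDE} to show that any joint limit law is supported on the diagonal, and then invokes Lemma \ref{GK}. You instead exploit that the limit is \emph{deterministic}: every subsequential limit law is the Dirac mass $\delta_\xi$ at the unique non-random solution, so the full sequence converges in law to a constant, and convergence in law to a constant upgrades to convergence in probability by Portmanteau. When the limit is deterministic, your argument is the more elementary one---Gy\"ongy--Krylov is the tool one reaches for when the limit is genuinely random (e.g.\ in the presence of common noise) and only a pathwise uniqueness statement is available; here both work, and they consume exactly the same inputs (tightness from Section \ref{subsect compactness}, identification from Section \ref{sec:pass}, uniqueness from Theorem \ref{Thm PDE}). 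Your explicit treatment of the non-metrizability of $\LL^2_w([0,T];\HH_p^\alpha)$, by restricting to balls furnished by Proposition \ref{prop:bound1} on which the weak topology is metrizable, is in fact more careful than the paper, which applies Lemma \ref{GK}---stated for Polish spaces---to $\mathfrak{Y}$ without comment.

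One step is imprecise as written: you claim that $\xi\in\LL^2([0,T];\LL^1\cap\LL^\infty)$ together with \eqref{eq:ck} and \eqref{eq:assM} yields $\|K\ast\xi\|_{\LL^\infty}\leqslant M$, whence $F(K\ast\xi)=K\ast\xi$. That conclusion needs the pointwise-in-time bound $\|\xi(t,\cdot)\|_{\LL^\infty}\leqslant\|\xi^{\rm ini}\|_{\LL^\infty}$, which does not follow from mere membership in $\LL^2([0,T];\LL^\infty)$; moreover the maximum principle of \cite{Ben} applies to \eqref{PDEintr2}, whose drift $K\ast\xi$ is divergence-free, and not directly to the truncated equation \eqref{PDEintrAUX} that your subsequential limit is known to solve, since the cut-off $F$ destroys the divergence-free structure. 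The repair is the paper's ordering in Section \ref{sec:conclusion}: observe that the proof of Theorem \ref{Thm PDE} in fact gives uniqueness for \eqref{PDEintrAUX} (it estimates $F(K\ast\xi^1)\xi^1-F(K\ast\xi^2)\xi^2$ throughout, using only that $F$ is bounded and Lipschitz); then note that the known deterministic solution of \eqref{PDEintr2} satisfies $\|\xi\|_{\LL^\infty}\leqslant\|\xi^{\rm ini}\|_{\LL^\infty}$ by \cite{Ben}, hence $\|K\ast\xi\|_{\LL^\infty}\leqslant M$ by \eqref{eq:ck} and \eqref{eq:assM}, so it is \emph{also} a solution of \eqref{PDEintrAUX}; uniqueness for \eqref{PDEintrAUX} then identifies every subsequential limit with this deterministic solution. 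With that reordering, the rest of your argument---the subsequence principle, metrizability on balls, and the Portmanteau step---goes through unchanged.
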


\begin{proof} 
 We denote  the joint law of $ (g^{N} ,g^{M} )$  by $\nu^{N,M}$. Similarly  to the proof of tightness for $g^{N}$ (Section \ref{subsect compactness}) we have that the family $\{\nu^{N,M}\}$ is tight in    $\mathfrak{Y} \times  \mathfrak{Y}$, where $\mathfrak{Y}$ has been defined in \eqref{topology of convergence}.

Let us take any subsequence $ \nu^{N_k,M_k} $. By  Prohorov's  Theorem, it is
relatively weakly compact hence it contains a weakly convergent subsequence. Without
loss of generality we may assume that the original sequence $\{\nu^{N,M} \}$ itself converges
weakly to a measure $\nu$. According to  Skorokhod immersion's Theorem, we
infer the existence of a probability space  $\big(  \widebar{\Omega}, \widebar{\mathcal{F}},\widebar{\mathbb{P}} \big)$   with a sequence of random variables
$(\widebar{g}^{N}, \widebar{g}^{M})$ converging almost surely in $\mathfrak{Y} \times \mathfrak{Y} $ to random variable  $   (\bar{u}, \check{u})$ and
 the laws of $(\widebar{g}^{N}, \widebar{g}^{M}) $  and $ (\bar{u}, \check{u})$
  under $ \widebar{\mathbb{P}}$  coincide with  $  \nu^{N,M} $ and   $\nu$, respectively.

	Analogously, it can be applied to both  $\widebar{g}^{N}$   and $ \widebar{g}^{M}$  in order
to show that  $\bar{u}$ and $\check{u}$    are  two solutions of the PDE \eqref{PDEintr2}. By Theorem \ref{Thm PDE} which gives the uniqueness of the solution we have  $\bar{u}=\check{u} $. 
 Therefore 
	$$
	\nu  \big((x,y)\in \mathfrak{Y}\times \mathfrak{Y} \; ; \; x=y\big)= \widebar{\mathbb{P}} (\bar{u}=\check{u})=1. 
$$
Now, we have all in hands to apply  Gyongy-Krylov's characterization
of convergence in probability, which reads as follows: 
\begin{lemma}[Gyongy-Krylov \cite{gyon}] \label{GK} Let $\{X_{n}\}$ be a sequence of random elements in a Polish
space $\Psi$ equipped with the Borel $\sigma$-algebra. Then $X_{n}$ converges in
probability to an $\Psi$-valued random element if, and only if, for each pair
$(X_{\ell},X_{m})$ of subsequences, there exists a subsequence $\{v_{k}\}$ given by
\[
v_{k}= (X_{\ell(k)}, X_{m(k)}),
\]
converging weakly to a random element $v(x,y)$ supported on the diagonal set
\[
\{ (x,y) \in\Psi\times\Psi: x= y \}.
\]

\end{lemma}

This lemma implies that the original
sequence is  defined on the initial probability space converges in probability
in the topology of $\mathfrak{Y}$ to a random variable $\mu$.
\end{proof}

\subsection{Conclusion}
\label{sec:conclusion}

Let $\xi(t,x)$ be the unique solution of the vorticity equation  \eqref{PDEintr2} with 
initial condition $ \xi^{\rm ini}\in \LL^{\infty}\cap \LL^{1}(\RR^{2})$. From \cite{Ben} we have $\| \xi\|_{\infty}\leq  \| \xi^{\rm ini}\|_{\infty}$. Then, by definition of the Biot-Savart kernel $K$, there is a positive constant $c_K$ (given by \eqref{eq:ck}) such that 
\[\| K \ast  \xi\|_{\infty}\leq  c_{K}( 1+ \| \xi^{\rm ini}\|_{\infty}).\]
 Therefore if we take $M\geq   c_K( 1+ \| \xi^{\rm ini}\|_{\infty})$, we conclude that  $\xi(t,x)$
coincides with the unique solution of \eqref{PDEintrAUX}, which is satisfied by the limit point of the sequence $\{g^N\}$.

\section{Technical proofs} \label{sec:proof-tech}
In this last section we prove Proposition \ref{prop:bound1} and Proposition \ref{prop:bound2}.

\subsection{Proof of Proposition \ref{prop:bound1}}
\label{ssec:proof1}
Let us prove the first estimate on $g^N$ given in Proposition \ref{prop:bound1}, namely \eqref{eq:firstestim}. \mm{Let $q \geqslant 2$.}

\bigskip
\noindent 
\textbf{Step 1}. From \eqref{equatiog1h} after a multiplication by $(\mathrm{I}-A)^{\mm{\alpha}/2}$ and by triangular inequality we have
\begin{align}
\Big\Vert \left(  \mathrm{I}-A\right)^{\mm{\alpha}/2}& g_{t}^{N}\Big\Vert _{\mathbb L^{\mm{p}}\left(  \mathbb{R}^{2}\right)  }  
\leq\left\Vert \left(  \mathrm{I}-A\right)  ^{\mm{\alpha}/2}e^{tA}g_{0}^{N}\right\Vert _{\mathbb L^{\mm{p}}\left( \mathbb{R}^{2}%
\right)  } \label{eq:first} \vphantom{\Bigg(}\\
&  +\int_{0}^{t}\left\Vert \left( \mathrm{I} -A\right)  ^{\mm{\alpha}/2}\nabla
e^{\left(  t-s\right)  A}\left(  V^{N} \ast\left(
F(K\ast g_{s}^{N}) S_{s}^{N}\right)  \right)  \right\Vert _{\mathbb L^{\mm{p}}\left(
\mathbb{R}^{2}\right)  }ds \label{eq:second} \vphantom{\Bigg(}\\
&  +\bigg\Vert \frac{{\sqrt{2\nu}}}{N}\sum_{i=1}^{N}\int_{0}^{t}\left(  \mathrm{I}-A\right)
^{\mm{\alpha}/2}\nabla e^{\left(  t-s\right)  A}\left(  
V^{N} \left(  \cdot-X_{s}^{i,N}\right)  \right)  dW_{s}%
^{i}\bigg\Vert _{\mathbb{L}^{\mm{p}}\left(  \mathbb{R}^{2}\right)  }. \label{eq:third}
\end{align}
We denote $H:=\LL^{\mm{p}}(\mathbb{R}^{2})$. Then 
\begin{align*}
\Big\Vert \left(  \mathrm{I}-A\right)^{\mm{\alpha}/2}& g_{t}^{N}\Big\Vert _{\mathbb L^{\mm{q}}\left( \Omega, H \right)  }  
\leq\left\Vert \left(  \mathrm{I}-A\right)  ^{\mm{\alpha}/2}e^{tA}g_{0}^{N}\right\Vert_{\mathbb L^{\mm{q}}\left( \Omega, H \right) }
  \vphantom{\Bigg(}\\
&  +\int_{0}^{t}\left\Vert \left( \mathrm{I} -A\right)  ^{\mm{\alpha}/2}\nabla
e^{\left(  t-s\right)  A}\left(  V^{N} \ast\left(
F(K\ast g_{s}^{N}) S_{s}^{N}\right)  \right)  \right\Vert_{\mathbb L^{\mm{q}}\left( \Omega, H \right) } ds  \vphantom{\Bigg(}\\
&  +\bigg\Vert \frac{{\sqrt{2\nu}}}{N}\sum_{i=1}^{N}\int_{0}^{t}\left(  \mathrm{I}-A\right)
^{\mm{\alpha}/2}\nabla e^{\left(  t-s\right)  A}\left(  
V^{N} \left(  \cdot-X_{s}^{i,N}\right)  \right)  dW_{s}%
^{i}\bigg\Vert_{\mathbb L^{\mm{q}}\left( \Omega, H \right) }. 
\end{align*}

\noindent \textbf{Step 2}. The first term \eqref{eq:first} can be estimated by
\[
\left\Vert \left(  \mathrm{I}-A\right) ^{\mm{\alpha}/2}e^{tA}g_{0}^{N}\right\Vert_{\mathbb L^{\mm{q}}\left( \Omega, H \right) }  
\leq \left\Vert \left(  \mathrm{I}-A\right)^{\mm{\alpha}/2} g_{0}^{N} \right\Vert_{\mathbb L^{\mm{q}}\left( \Omega, H \right) }  \leq   
C_{\mm{q}}.
\]
 The boundedness of $ g_{0}^{N}$ follows from Assumption \ref{assump}, item \textit{3.}

\bigskip

\noindent \textbf{Step 3}. Let us come to the second term \eqref{eq:second}:
\begin{align*}
  \int_{0}^{t}\big\Vert  &\left(   \mathrm{I}-A\right)  ^{\mm{\alpha}/2}\nabla
e^{\left(  t-s\right) A}\left(  V^{N} \ast\left(
F(K\ast g_{s}^{N}) S_{s}^{N}\right)  \right)  \big\Vert_{\mathbb L^{\mm{q}}\left( \Omega, H \right) } ds\\
&    \leq  C \; \int_{0}^{t} \bigg\{\big\Vert (   \mathrm{I}-A  )^{\mm{(1+\alpha)}/2}
e^{\mm{(t-s)} A}\big\Vert _{\LL^{\mm{p}}
\rightarrow \LL^{\mm{p}}  }\\ & \qquad \qquad\qquad \qquad \times \mm{\big\Vert 
 V^{N} \ast\left(  F(K\ast g_{s}^{N}) S_{s}^{N}\right)
  \big\Vert}_{\mathbb L^{\mm{q}}\left( \Omega, H \right) }\bigg\}ds.
\end{align*}
 We have 
\begin{equation*}
 \big\Vert \left(   \mathrm{I}-A\right)^{\mm{(1+\alpha)}/2}e^{\mm{(t-s)}A} \big\Vert _{\LL^{\mm{p}}  \rightarrow \LL^{\mm{p}} } \leq \frac{C_{\mm{\nu,T}}}{(t-s)^{\mm{(1+\alpha)}/2}} \mm{.}
\end{equation*}
On the other hand, for any $x \in \RR^{2}$, 
\[
\big\vert \left(  V^{N}\ast\left( F(K\ast g_{s}^{N})S_{s}^{N}\right)  \right)
\left(  x\right)  \big\vert \leq  \left\Vert F(K\ast g_{s}^{N}) \right\Vert _{\infty}   \big|V^{N} \ast S_{s}^{N}  \left(  x\right)\big|
\leq M  \; \big|g_{s}^{N}\left(  x\right)\big|.
\]
\mm{Hence,
\[\big\Vert  V^{N} \ast\left(  F(K\ast g_{s}^{N}) S_{s}^{N}\right)
 \big\Vert_{\mathbb L^{\mm{q}}\left( \Omega, H \right) }  \leqslant M \big\|g_s^N\big\|_{\LL^q(\Omega,H)} \leqslant C_M \big\|\left(  \mathrm{I}-A\right)  ^{\mm{\alpha}/2} g_s^N \big\|_{\LL^q(\Omega,H)}. \]

 }
%
%\sout{Using Lemma 16 in \cite{flan}}
% \mm{To summarize we have proved}
%\begin{multline*}
%\left\Vert   \left(   \mathrm{I}-A\right)^{\alpha/2} e^{((t-s)/2) A}\big(V^{N}\ast\left( F(K\ast g_{s}^{N})S_{s}^{N}\right)
%\big)\right\Vert_{\mathbb L^{p}\left( \Omega, H \right) } \\
%\leq
%C_M\; \big\Vert  e^{((t-s)/2) A} \left(   \mathrm{I}-A\right) ^{\alpha/2} g_{s}^{N}\big\Vert_{\mathbb L^{p}\left( \Omega, H \right) }
%\leq C_M\; \big\Vert \left(   \mathrm{I}-A\right) ^{\alpha/2} g_{s}^{N}\big\Vert_{\mathbb L^{p}\left( \Omega, H \right) }. 
%\end{multline*}
%
\mm{To summarize, we have proved}
\begin{multline*}
\int_{0}^{t}\left\Vert \left(  \mathrm{I}-A\right)  ^{\mm{\alpha}/2}\nabla
e^{\left(  t-s\right)  A}\left(   V^{N} \ast\left(
F(K\ast g_{s}^{N}) S_{s}^{N}\right)  \right)  \right\Vert_{\mathbb L^{\mm{q}}\left( \Omega, H \right) }ds \\
 \leq \ C_{ \mm{\nu, M,T}}  \ \int_{0}^{t} \ (t-s)^{\mm{(1+\alpha)}/2} \ \left\Vert  \left(  \mathrm{I}-A\right)  ^{\mm{\alpha}/2}  g_{s}^{N}   \right\Vert_{\mathbb L^{\mm{q}}\left( \Omega, H \right) } ds .
\end{multline*}
This bounds the second term. \mm{Recall that $\alpha < 1$ therefore $(t-s)^{-(1+\alpha)/2}$ is integrable.}

\bigskip

\noindent\textbf{Step 4}. The estimate of the third term \eqref{eq:third} is quite tricky and we
postpone it to Lemma \ref{lemma est stoch term} below, see \eqref{unif estimate on stochastic term}. Collecting the three
bounds together, we get%
\[
\left\Vert \left(  \mathrm{I}-A\right)  ^{\mm{\alpha}/2} g_{t}^{N}\right\Vert_{\mathbb L^{\mm{q}}\left( \Omega, H \right) }
\leq   C_{\mm{q,T}} + C_{\mm{\nu, M,T}} \ \int_{0}^{t}   \   
(t-s)^{\mm{(1+\alpha)}/2}   \left\Vert \left(  \mathrm{I}-A\right)^{\mm{\alpha}/2}g_{s}^{N}\right\Vert_{\mathbb L^{\mm{q}}\left( \Omega, H \right) }  ds.
\]
We may apply  Gronwall's Lemma we  deduce
\[
\left\Vert \left(  \mathrm{I}-A\right) ^{\mm{\alpha}/2} g_{t}^{N}\right\Vert_{\mathbb L^{\mm{q}}\left( \Omega, H \right) }
\leq   C_{\mm{q,T, M, \nu}},\]
and Proposition \ref{prop:bound1} follows.

\begin{lemma}
\label{lemma est stoch term}  We assume Assumption \ref{assump}. \mm{Let $q\geqslant 2$.} Then there exists a constant $C_{\mm{q},T}>0$ such that for all $t \in [0,T]$,
\begin{equation}
\bigg\Vert \frac{1}{N}\sum_{i=1}^{N}\int_{0}^{t}\left(  \mathrm{I}-A\right)
^{\mm{\alpha}/2}\nabla e^{\left(  t-s\right)  A}\left( 
V^{N} \left(  \cdot-X_{s}^{i,N}\right)  \right)  dW_{s}%
^{i}\bigg\Vert_{\mathbb L^{\mm{q}}\left( \Omega, \LL^{\mm{p}}(\mathbb{R}^{2}) \right) }^{\mm{q}}\leq
C_{\mm{q,T}}. \label{unif estimate on stochastic term}%
\end{equation}

\end{lemma}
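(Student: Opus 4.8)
The plan is to estimate the $\mathbb{L}^q(\Omega)$-norm of the stochastic convolution by exploiting the fact that the $W^i$ are independent Brownian motions and that the summands are orthogonal martingale increments. First I would reduce the spatial $\mathbb{L}^p(\mathbb{R}^2)$-norm to a pointwise estimate: by Minkowski's integral inequality (moving the $\mathbb{L}^p(\mathbb{R}^2)$-norm inside the $\mathbb{L}^q(\Omega)$-norm, valid since $q \geqslant p$ is typically arranged, or otherwise using Fubini together with $q\geqslant 2$), it suffices to control, for each fixed $x \in \mathbb{R}^2$, the quantity
\[
\mathbb{E}\Bigg[\bigg| \frac{1}{N}\sum_{i=1}^N \int_0^t \Big((\mathrm{I}-A)^{\alpha/2}\nabla e^{(t-s)A} V^N(\cdot - X_s^{i,N})\Big)(x)\; dW_s^i \bigg|^q\Bigg],
\]
and then integrate the resulting bound in $x$.

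Next I would apply the Burkholder-Davis-Gundy inequality to the $\mathbb{R}^2$-valued stochastic integral. Because the Brownian motions $\{W^i\}$ are independent, the quadratic variation of the sum is the sum of the individual quadratic variations, so BDG produces
\[
\mathbb{E}\big[|\cdots|^q\big] \leqslant C_q\; \mathbb{E}\Bigg[\bigg(\frac{1}{N^2}\sum_{i=1}^N \int_0^t \big|\Psi_{t-s}^{N}(x - X_s^{i,N})\big|^2 \; ds\bigg)^{q/2}\Bigg],
\]
where I abbreviate $\Psi_r^N := (\mathrm{I}-A)^{\alpha/2}\nabla e^{rA} V^N$. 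The crucial gain is the $N^{-2}$ in front together with $N$ summands, giving an effective $N^{-1}$; using Jensen (or Hölder) on the normalized sum $\frac1N\sum_i$ to pull the power $q/2$ inside, this leaves me needing a uniform-in-$x$, uniform-in-$i$ bound on the deterministic time integral $\int_0^t \|\Psi_{t-s}^N\|_{?}^2\, ds$, so that the remaining $N$-dependence collapses to a constant.

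The heart of the matter, and the step I expect to be the main obstacle, is the deterministic smoothing-kernel estimate: controlling the time integral of $\|\Psi_r^N\|^2$ uniformly in $N$. Here I would use the semigroup smoothing bound \eqref{eq:etoile}, writing
\[
\big\|(\mathrm{I}-A)^{\alpha/2}\nabla e^{rA} V^N\big\|_{\LL^p} \leqslant \big\|(\mathrm{I}-A)^{(1+\alpha)/2} e^{rA}\big\|_{\LL^p\to\LL^p}\; \|V^N\|_{\LL^p} \leqslant \frac{C_{\nu,T}}{r^{(1+\alpha)/2}}\; \|V^N\|_{\LL^p}.
\]
The scaling $V^N(x)=N^{2\beta}V(N^\beta x)$ gives $\|V^N\|_{\LL^p} = N^{2\beta(1-1/p)}\|V\|_{\LL^p}$, which grows in $N$; this growth must be beaten by the gain from the independence averaging (the $N^{-1}$ factor above) together with a cutoff of the time singularity. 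The singular factor $r^{-(1+\alpha)/2}$ is \emph{not} integrable near $r=0$ since $\alpha>0$, so the naive split does not close directly; instead I would split the time integral at a scale $r \sim N^{-2\beta}$ governed by the kernel width, estimating the short-time part with a sharper $\LL^p\to\LL^p$ bound that accounts for the derivative landing on $V^N$ directly (avoiding the full power of the semigroup singularity), and the long-time part with \eqref{eq:etoile}. The precise bookkeeping of these exponents against the constraint $\beta < (4+2\alpha-\tfrac4p)^{-1}$ from \eqref{eq:alpha} is exactly what guarantees the final bound is uniform in $N$, and verifying that the exponents balance is the delicate calculation I would carry out carefully.
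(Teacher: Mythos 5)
Your proposal is sound but takes a genuinely different route from the paper. The paper never works pointwise in $x$: it first converts the $\mathbb{L}^q(\Omega;\mathbb{L}^p)$ bound into an $\mathbb{L}^q(\Omega;\mathbb{L}^2)$ one via a Sobolev embedding, at the cost of raising the regularity index from $\alpha$ to $1+\alpha-\frac2p$, and then applies the Burkholder--Davis--Gundy inequality for Hilbert-space-valued stochastic integrals; translation invariance of the $\mathbb{L}^2$ norm then makes the quadratic variation \emph{deterministic}, equal to $\frac1N\int_0^t\big\Vert(\mathrm{I}-A)^{(1+\alpha-\frac2p)/2}\nabla e^{(t-s)A}V^N\big\Vert_{\mathbb{L}^2}^2\,ds$. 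Instead of your time-splitting, the paper tames the non-integrable singularity by shifting $\delta$ derivatives onto $V^N$: writing the integrand as $(\mathrm{I}-A)^{-\delta/2}\nabla e^{(t-s)A}(\mathrm{I}-A)^{(1+\alpha-\frac2p+\delta)/2}V^N$ costs only $(t-s)^{\delta-1}$, which \emph{is} integrable, against $\Vert V^N\Vert_{1+\alpha-\frac2p+\delta,2}^2\sim N^{2\beta(2+\alpha-\frac2p+\delta)}$, and condition \eqref{eq:alpha} closes the estimate. Your alternative does balance: with the scaling $\Vert V^N\Vert_{s,p}\sim N^{\beta(2+s-\frac2p)}$, the short-time part (all derivatives landing on $V^N$, the semigroup being contractive) contributes $\tau\, N^{2\beta(3+\alpha-\frac2p)}$ while the long-time part contributes $\tau^{-\alpha}N^{2\beta(2-\frac2p)}$, and both equal $N^{2\beta(2+\alpha-\frac2p)}$ at your scale $\tau=N^{-2\beta}$, so the factor $\frac1N$ coming from the independence of the $W^i$ yields a bound uniform in $N$ precisely when $\beta<\frac{1}{4+2\alpha-\frac4p}$ --- the same constraint as the paper, unsurprisingly, since your splitting of the time integral and the paper's $\delta$-borrowing are the same interpolation performed in different variables. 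Two points to tighten: the reduction of the random quadratic variation to a deterministic time integral should go through the translation invariance $\Vert\Psi^N_{t-s}(\cdot-X_s^{i,N})\Vert_{\mathbb{L}^p}=\Vert\Psi^N_{t-s}\Vert_{\mathbb{L}^p}$ (after Minkowski's inequality with exponents $\frac p2\leq\frac q2$), not through a ``uniform-in-$x$, uniform-in-$i$'' bound as you phrase it, since a sup-in-$x$ estimate is constant in $x$ and destroys $\mathbb{L}^p_x$-integrability; and your restriction to $q\geq p$ is harmless because $\mathbb{L}^q(\Omega)$ norms are monotone in $q$ on a probability space, so the full range $q\geq 2$ of the lemma follows from the large-$q$ case.
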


\begin{proof} %\sout{Recall the notation $H:=\LL^2(\RR^2)$.} 
\mm{From Sobolev embeddings we have
\begin{multline*}
\bigg\Vert \frac{1}{N}\sum_{i=1}^{N}\int_{0}^{t}\left(  \mathrm{I}-A\right)
^{\mm{\alpha}/2}\nabla e^{\left(  t-s\right)  A}\left( 
V^{N} \left(  \cdot-X_{s}^{i,N}\right)  \right)  dW_{s}%
^{i}\bigg\Vert_{\mathbb L^{\mm{q}}\left( \Omega, \mathbb{L}^\mm{p}(\RR^2) \right) }^{\mm{q}}
\\ \leqslant C \bigg\Vert \frac{1}{N}\sum_{i=1}^{N}\int_{0}^{t}\left(  \mathrm{I}-A\right)
^{\mm{(1+\alpha-\frac2p)}/2}\nabla e^{\left(  t-s\right)  A}\left( 
V^{N} \left(  \cdot-X_{s}^{i,N}\right)  \right)  dW_{s}%
^{i}\bigg\Vert_{\mathbb L^{\mm{q}}\left( \Omega, \mathbb{L}^\mm{2}(\RR^2) \right) }^{\mm{q}}.
\end{multline*}}
 From the Burkholder-Davis-Gundy inequality (see \cite{vanNeVerWeis} for instance)
we obtain
\begin{multline*}
\bigg\Vert \frac{1}{N}\sum_{i=1}^{N}\int_{0}^{t}\left(  \mathrm{I}-A\right)
^{\mm{(1+\alpha-\frac2p)}/2}\nabla e^{\left(  t-s\right)  A}\left( 
V^{N} \left(  \cdot-X_{s}^{i,N}\right)  \right)  dW_{s}%
^{i}\bigg\Vert_{\mathbb L^{\mm{q}}\left( \Omega,\mm{ \mathbb{L}^2(\RR^2)} \right) }^{\mm{q}}
\\
\leq C_{\mm{q}} \; \EE\bigg[    \frac{1}{N^{2}}\sum_{i=1}^{N}\int_{0}^{t} \left\Vert\left(  \mathrm{I}-A\right)
^{\mm{(1+\alpha-\frac2p)}/2}\nabla e^{\left(  t-s\right)  A}\left( 
V^{N} \left(  \cdot-X_{s}^{i,N}\right)  \right) \right\Vert_{\mm{ \mathbb{L}^2(\RR^2)}}^{2} ds\bigg]^{\mm{q}/2}.
\end{multline*}
Moreover, we can estimate 
\begin{align}
\frac{1}{N^{2}} \int_{\mathbb{R}^{2}}  \sum_{i=1}
^{N}\int_{0}^{t}&\left\vert \left(  \left(  \mathrm{I}-A\right)^{\mm{(1+\alpha-\frac2p)}/2}\nabla
e^{\left(  t-s\right)  A}\left( V^{N} \left(
\cdot-X_{s}^{i,N}\right)  \right)  \right)  \left(  x\right)  \right\vert
^{2}ds dx  \notag\\
& =\frac{1}{N} \int_{0}^{t}   \left\Vert   \left(   \mathrm{I}-A\right)^{\mm{(1+\alpha-\frac2p)}/2}\nabla
e^{\left(  t-s\right)  A} V^{N} \right\Vert_{\mm{ \mathbb{L}^2(\RR^2)}}^{2} \ ds  \notag
\\ & 
=\frac{1}{N} \int_{0}^{t}   \left\Vert   \left(   \mathrm{I}-A\right)^{-\delta/2}\nabla
e^{\left(  t-s\right)  A} \left(   \mathrm{I}-A\right)^{(\mm{1+\alpha-\frac2p}+\delta)/2} V^{N} \right\Vert_{\mm{ \mathbb{L}^2(\RR^2)}}^{2} \ ds  \notag
\\
& \leq \frac{1}{N} \int_{0}^{t} \frac{1}{(t-s)^{1-\delta}}  \left\Vert V^{N} \right\Vert_{\mm{1+\alpha-\frac2p}+\delta,2}^{2} \ ds  \notag \\
\label{eqq}
& \leq C_{\mm{T,\delta}}\;  N^{\beta(2+2\delta+2\mm{\alpha}\mm{+2-\frac4p})-1}. \vphantom{\Bigg)}
\end{align}
Therefore \eqref{eqq} is bounded by some constant $C_{\mm{q,T}}$ if we take $\beta<\frac{1}{\mm{4+ 2\alpha-\frac4p}}$, $\delta$ close enough to zero.  This provides the bound of the lemma.
\end{proof}

\subsection{Proof of Proposition \ref{prop:bound2}}
\label{ssec:proof2}

Let us now prove the second estimate on $g^N$ given in Proposition \ref{prop:bound2}, namely \eqref{eq:secondestim}. \mm{Let $q'\geqslant 2$. }
In this proof we use the fact that $\LL^2(\RR^{2}) \subset \HH_{2}^{-2}$ with continuous embedding, and that the linear operator $\Delta$ is bounded from $\LL^2(\RR^{2})$ to $\HH_{2}^{-2}$. 

\mm{Let us first recall that, from interpolation, from Proposition \ref{prop:bound1} and using the fact that $\|g_t^N\|_{\mathbb{L}^1(\RR^2)}=1$, we have: for any $\theta \in (0,1)$,
\begin{equation}\label{eq:new} 
\EE\Big[ \big\|g_t^N\big\|_{0,2}^{q'}  \Big]\leqslant \EE\Big[ \big\|g_t^N\big\|_{0,p}^{\theta q'} \; \;  \big\|g_t^N\big\|_{\LL^1(\RR^2)}^{(1-\theta)q'} \Big] \leqslant \EE\Big[  \big\|g_t^N\big\|_{0,p}^{\theta q'} \Big] \leqslant C_{q',T}.
\end{equation}
}
We \mm{then} observe that 
\begin{align*}
 g_{t}^{N}(x) - g_{s}^{N}(x) &  =
\int_{s}^{t}\left\langle S_{r}%
^{N}, (K\ast F(g_{r}^{N})) \ \nabla  V^{N}  \left(  x-\cdot\right)  \right\rangle dr +  \nu \int_{s}^{t}  \Delta g_{r}^{N} (x)dr \\
& \quad +\frac{{\sqrt{2\nu}}}{N}\sum_{i=1}^{N}\int_{s}^{t}\nabla \left( V^{N}\right)  \left(  x-X_{r}^{i,N}\right)dW_{r}^{i} .
\end{align*}
Therefore we have 
\begin{align}
\EE \Big[ \big\Vert 
 g_{t}^{N}(x) - & g_{s}^{N}(x) \big\Vert_{-2,2}^{\mm{q'}} \Big] \notag \\ & \leq
 (t-s)^{\mm{q'}-1}  \int_{s}^{t}  \EE\Big[\left\Vert \left\langle S_{r}%
^{N},  F(K\ast g_{r}^{N}) \ \nabla  V^{N}  \left(  x-\cdot\right)  \right\rangle \right\Vert_{-2,2}^{\mm{q'}} \Big]  dr \label{eq:t1}\\
& \quad +  (t-s)^{\mm{q'}-1} \ \frac{1}{2}\int_{s}^{t}  \EE \Big[\left\Vert  \Delta g_{r}^{N} (x)  \right\Vert_{-2,2}^{\mm{q'}}\Big] \ dr \label{eq:t2}\\
& \quad + \EE \bigg[\bigg\Vert \frac{{\sqrt{2\nu}}}{N}\sum_{i=1}^{N}\int_{s}^{t}\nabla \left( V^{N}\right)  \left(  x-X_{r}^{i,N}\right)
dW_{r}^{i}   \bigg\Vert_{-2,2}^{\mm{q'}}\bigg]. \label{eq:t3}
\end{align}
To estimate the first term \eqref{eq:t1} we observe first that 
\begin{align}
\EE\Big[\big\Vert \left\langle S_{r}%
^{N}, F(K\ast g_{r}^{N})\ \nabla  V^{N}  \left(  x-\cdot\right)  \right\rangle \big\Vert_{-2,2}^{\mm{q'}} \Big] &= \EE\Big[\left\Vert  \nabla ( S_{r}^{N} F(K\ast g_{r}^{N})\ast  V^{N})  \right\Vert_{-2,2}^{\mm{q'}} \Big]
\notag \\ & 
\leq \EE\Big[\left\Vert  (S_{r}^{N}  F(K\ast g_{r}^{N}) \ast  V^{N}  \right\Vert_{-1,2}^{\mm{q'}}\Big]. \notag
\\ & 
\leq   C_{M} \EE\Big[\left\Vert  g_{t}^{N}  \right\Vert_{\LL^2(\RR^{2})}^{\mm{q'}}\Big]\leq C. \label{b0}
\end{align}
Moreover, for the second term \eqref{eq:t2} we write
\begin{equation}\label{b2}
 \EE\Big[\left\Vert  \Delta g_{r}^{N}   \right\Vert_{-2,2}^{\mm{q'}}\Big]\leq  C
\EE\Big[\left\Vert  g_{r}^{N}  \right\Vert_{{\LL^2(\RR^{2})}}^{\mm{q'}}\Big]\leq C_{\mm{q',T}},   
\end{equation}
\mm{from \eqref{eq:new}.} Finally  we bound the last term \eqref{eq:t3}:
\begin{multline*}
 \EE \bigg[\bigg\Vert   \frac{1}{N}\sum_{i=1}^{N}\int_{s}^{t}\nabla \left( V^{N}\right)  \left(  x-X_{r}^{i,N}\right)
dW_{r}^{i}   \bigg\Vert_{-2,2}^{\mm{q'}}\bigg]
\\ 
 \leq C_{\mm{q'}} \EE \bigg[   \frac{1}{N^{2}} \sum_{i=1}^{N}\int_{s}^{t} \Big\Vert  \nabla \left( V^{N}\right)  \left(  x-X_{r}^{i,N}\right)   \Big\Vert_{-2,2}^{\mm{q'}}  dr  \bigg]^{\mm{q'}/2}
\end{multline*}
and we observe  that 
\begin{multline*}
\frac{1}{N^{2}} \int_\RR  \ \sum_{i=1}^{N}\int_{s}^{t}  \Big\|(\mathrm{I}-A)^{-1}\; \nabla \left( V^{N}\right)  \left(  x-X_{r}^{i,N}\right)\Big\|^{2}dr   dx \\    =(t-s) \frac{1}{N} \big\| V^{N}  \big\|_{-1,2}^{2}   \leq (t-s) \frac{1}{N} \big\|V^N \big\|_{0,2}^2   \leq C N^{2\beta-1} (t-s) \leq C (t-s). \vphantom{\Big(}
\end{multline*}
In order to conclude the lemma, we need to divide \eqref{eq:t1}--\eqref{eq:t3} by $|t-s|^{1+\mm{q'}\gamma}$. From the previous estimates, we always get a term of the form $|t-s|^\varepsilon$ with $\varepsilon <1$ (using the assumption $\gamma < \frac12$). 
\newpage

\appendix

\section{More general initial data}
\label{app}

Assume that the initial vorticity $\xi^{\mathrm{ini}}$ has variable sign. Define%
\[
\xi_{+}^{\mathrm{ini}}:=\xi^{\mathrm{ini}}\vee0,\qquad\xi_{-}^{\mathrm{ini}%
}:=\left(  -\xi^{\mathrm{ini}}\right)  \vee0
\]
and
\[
\Gamma_{\pm}:=\int\xi_{\pm}^{\mathrm{ini}}\left(  x\right)  dx>0
\]
(they are finite, since we  assume $\xi^{\mathrm{ini}}\in \LL^{1}$). Let
$\{ X_{0}^{i,\pm}\}  _{i\in\mathbb{N}}$ be a double sequence of
random variables in $\mathbb{R}^{2}$ such that, for the empirical
measures%
\[
S_{0}^{N,\pm}:=\frac{\Gamma_{\pm}}{N}\sum_{i=1}^{N}\delta_{X_{0}^{i,\pm}}%
\]
one has, for some $\alpha>\mm{\frac2p}$, for any $\mm{q} >0$,%
\begin{equation}
\sup_{N\in\mathbb{N}}\mathbb{E}\left[  \big\Vert V^{N}\ast S_{0}^{N,\pm
}\big\Vert _{\mm{\alpha,p}}^{\mm{q}}\right]  \ <\infty\label{initial cond twosigns}%
\end{equation}
and the two sequences of measures $\{S_{0}^{N,\pm}\}_{N\in\mathbb{N}}$ weakly
converge to the initial measures $\xi_{\pm}^{\mathrm{ini}}(\cdot)dx$, as
$N\rightarrow\infty$, in probability: \textit{i.e.}
\begin{equation}
S_{0}^{N,\pm}\xrightarrow[N\rightarrow\infty]{}\xi_{\pm
}^{\mathrm{ini}}(\cdot)dx\qquad\text{in probability.}%
\label{initial conv twosigns}%
\end{equation}
Consider the system of PDEs, given for $x\in\mathbb{R}^{2},t>0$ by%
\begin{equation}\label{eq:PDEsystem}
\begin{aligned}
\partial_{t}\xi^{+}+u\;\sca\; \nabla\xi^{+}  &  =\nu\Delta\xi^{+},\\
\partial_{t}\xi^{-}+u\;\sca\; \nabla\xi^{-}  &  =\nu\Delta\xi^{-},\\
u  &  =K\ast\left(  \xi^{+}-\xi^{-}\right) \\
\xi^{+}|_{t=0}  &  =\xi_{0}^{+},\qquad\xi^{-}|_{t=0}=\xi_{0}^{-}.
\end{aligned} \end{equation}
It is not difficult to prove the same results of existence and
uniqueness as the ones obtained for the usual Navier-Stokes equations
\begin{equation}
\partial_{t}\xi+u\;\sca\; \nabla\xi=\nu\Delta\xi,\qquad x\in\mathbb{R}%
^{2},t>0. \label{Vorty2}%
\end{equation}
Moreover, if $\left(  \xi^{+},\xi^{-}\right)  $ is a solution of the system,
then $\xi=\xi^{+}-\xi^{-}$ is a solution of (\ref{Vorty2}); if $\xi$ is a
solution of (\ref{Vorty2}) and $\xi^{+}$ is a solution of the first equation
of the system, with $u=K\ast\xi$, then $\xi^{-}=\xi^{+}-\xi$ is a solution of
the second equation of the system, and $u=K\ast\left(  \xi^{+}-\xi^{-}\right)
$ holds. In this sense the system and (\ref{Vorty2}) are equivalent.

\bigskip

Let $\{  W_{t}^{i,\pm}\}_{i\in\mathbb{N}}$ be a family of
independent $\mathbb{R}^{2}$-valued Brownian motions, defined on the same
probability space as $\{ X_{0}^{i,\pm}\} _{i\in\mathbb{N}}$ and
independent of them. Given $N\in\mathbb{N}$ consider particles with positions $\{
X_{t}^{i,N,\pm}\}_{i\in\mathbb{N}}$ satisfying%
\begin{align*}
dX_{t}^{i,N,+}    = & \; F\bigg( \Gamma_{+} \; \frac{1}{N}\sum_{k=1}^{N}(K\ast V^{N}%
)(X_{t}^{i,N,+}-X_{t}^{k,N,+}) \\ & \qquad -  \Gamma_{-}\; \frac{1}{N}\sum_{k=1}^{N}(K\ast V^{N}%
)(X_{t}^{i,N,+}-X_{t}^{k,N,-})\bigg)  \;dt\\
&  +\sqrt{2\nu}\;dW_{t}^{i,+} \vphantom{\bigg(}\\
dX_{t}^{i,N,-}   =& \; F\bigg(  \Gamma_{+}\; \frac{1}{N}\sum_{k=1}^{N}(K\ast V^{N}%
)(X_{t}^{i,N,-}-X_{t}^{k,N,+}) \\ & \qquad -\Gamma_{-}\; \frac{1}{N}\sum_{k=1}^{N}(K\ast V^{N}%
)(X_{t}^{i,N,-}-X_{t}^{k,N,-})\bigg)  \;dt\\
&  +\sqrt{2\nu}\;dW_{t}^{i,-}\vphantom{\bigg(}
\end{align*}
with initial conditions $\{  X_{0}^{i,\pm}\} _{i\in\mathbb{N}}$.
Consider the associated empirical measures%
\[
S_{t}^{N,\pm}:=\frac{\Gamma_{\pm}}{N}\sum_{i=1}^{N}\delta_{X_{t}^{i,\pm}}%
\]
and empirical densities%
\[
g_{t}^{N,\pm}:=V^{N}\ast S_{t}^{N,\pm}.
\]

\begin{theorem}\label{Thm 1bis} Assume
on $V$, $\beta$, $\alpha$, \mm{$p$}, the same conditions of Assumption \ref{assump}
and in addition assume (\ref{initial cond twosigns}) and
(\ref{initial conv twosigns}). Consider the particle system $\{
X_{t}^{i,\pm}\}  _{i\in\mathbb{N}}$ with the parameter $M$ which
satisfies
\begin{equation}
M\geqslant c_{K}\left(  1+\Vert\xi^{\mathrm{ini}}\Vert_{\mathbb{L}^{\infty}%
}\right).  \label{eq:assMbis}
\end{equation} Then, for every $\eta
\in(\mm{\frac2p},\alpha)$, the sequence of processes $\{  g_{t}^{N,+},g_{t}%
^{N,-}\}  $ converges in probability with respect to the

\begin{itemize}
\item weak topology of $\Big(\mathbb{L}^{2}\left(  [0,T]\;;\;\mathbb{H}%
_{\mm{p}}^{\alpha}(\mathbb{R}^{2})\right)\Big)  ^{2}$,

\item strong topology of $\Big(C\big(  [0,T]\;;\;\mm{\mathbb{W}_{\mathrm{loc}}^{\eta,p}}(\mathbb{R}^{2})\big) \Big) ^{2}$,
\end{itemize}
as $N\rightarrow\infty$, to the unique weak solution of the PDE system \eqref{eq:PDEsystem}
and thus $g_{t}^{N,+}-g_{t}^{N,-}$ converges, in the same topologies, to the
unique weak solution of the PDE (\ref{Vorty2}).
\end{theorem}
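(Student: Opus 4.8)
The plan is to repeat, almost verbatim, the four-step strategy developed for Theorem~\ref{Thm 1}, now applied to the coupled pair $(g^{N,+},g^{N,-})$, exploiting the decoupling structure of the system~\eqref{eq:PDEsystem}. First I would derive the mild formulation for each component. Since $S_t^{N,\pm}=\frac{\Gamma_\pm}{N}\sum_i\delta_{X_t^{i,\pm}}$, the common drift appearing in both particle equations is $F(K\ast(g_t^{N,+}-g_t^{N,-}))$, evaluated at the respective particle positions; writing $u_t^N:=K\ast(g_t^{N,+}-g_t^{N,-})$, each $g^{N,\pm}$ then satisfies a mild identity identical in form to~\eqref{equatiog1h}, with $g^N$ replaced by $g^{N,\pm}$ and the self-interaction velocity $K\ast g^N$ replaced by $u^N$. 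The total masses $\|g_t^{N,\pm}\|_{\LL^1}=\Gamma_\pm$ are conserved and play the role that $\|g_t^N\|_{\LL^1}=1$ did in the single-sign case.

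Second, I would establish the analogues of Proposition~\ref{prop:bound1} and Proposition~\ref{prop:bound2} for each of $g^{N,+}$ and $g^{N,-}$. These proofs transfer with no essential change: they rely only on the boundedness $\|F\|_{\LL^\infty}\le M$ and the Lipschitz property of $F$, on the semigroup bound~\eqref{eq:etoile}, and on the initial estimate~\eqref{initial cond twosigns} (in place of~\eqref{initial cond}). The crucial point is that the argument of $F$ never enters the estimates except through $\|F\|_{\LL^\infty}$, so the coupling through $u^N$ is harmless. Having both uniform bounds, the compactness criterion of Section~\ref{subsect compactness} yields tightness of the laws of $(g^{N,+},g^{N,-})$ in $\mathfrak{Y}\times\mathfrak{Y}$, and by Prohorov and Skorokhod I may assume almost sure convergence of a subsequence to a pair $(\xi^+,\xi^-)$.

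Third, I would pass to the limit exactly as in Section~\ref{sec:pass}, componentwise. The only nonlinear term requires that $u^N=K\ast(g^{N,+}-g^{N,-})$ converge to $K\ast(\xi^+-\xi^-)$ strongly enough, which follows from the same Calderón--Zygmund and Sobolev-embedding estimates applied to the difference $g^{N,+}-g^{N,-}$. This identifies every limit point $(\xi^+,\xi^-)$ as a weak solution of~\eqref{eq:PDEsystem} with $F$ still present, i.e.~with velocity $F(u)$, $u=K\ast(\xi^+-\xi^-)$.

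Finally, I would remove the cut-off and upgrade to convergence in probability. For uniqueness of the limiting system I would use the equivalence, recalled after~\eqref{eq:PDEsystem}, between the pair $(\xi^+,\xi^-)$ and the scalar vorticity $\xi=\xi^+-\xi^-$: the latter solves~\eqref{Vorty2} and is unique by Theorem~\ref{Thm PDE}, so $u=K\ast\xi$ is determined; then $\xi^+$ solves the \emph{linear} advection--diffusion equation $\partial_t\xi^+ + u\cdot\nabla\xi^+=\nu\Delta\xi^+$ with this fixed $u$, whose uniqueness follows from the very same mild-formulation Gronwall argument used in Theorem~\ref{Thm PDE}, and $\xi^-=\xi^+-\xi$ is then determined as well. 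The cut-off is discarded exactly as in Section~\ref{sec:conclusion}: since $\|\xi^\pm\|_\infty$, and hence $\|K\ast(\xi^+-\xi^-)\|_\infty$, are controlled by $c_K(1+\|\xi^{\mathrm{ini}}\|_{\LL^\infty})$, the choice~\eqref{eq:assMbis} guarantees that $F$ acts as the identity on the limiting velocity, so $(\xi^+,\xi^-)$ solves the genuine system~\eqref{eq:PDEsystem} and $\xi=\xi^+-\xi^-$ solves~\eqref{Vorty2}. Uniqueness then lets me apply the Gyongy--Krylov criterion (Lemma~\ref{GK}) on the product space $\mathfrak{Y}\times\mathfrak{Y}$, exactly as in Corollary~\ref{coroconveP2}, to conclude convergence in probability of the whole sequence $(g^{N,+},g^{N,-})$, and thus of $g^{N,+}-g^{N,-}$ toward $\xi$. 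The main obstacle is the uniqueness of the coupled system; the reduction to scalar uniqueness plus linear uniqueness just described is precisely what makes this step, as the authors put it, ``not difficult''.
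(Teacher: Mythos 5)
Your proposal follows essentially the same route as the paper's Appendix~\ref{app}: the componentwise mild formulation with common velocity $K\ast(g^{N,+}-g^{N,-})$, the transfer of Propositions~\ref{prop:bound1} and~\ref{prop:bound2} (which use $F$ only through its $\mathbb{L}^\infty$ bound and Lipschitz constant), tightness on $\mathfrak{Y}\times\mathfrak{Y}$, the $C/N^{\widetilde{\eta}\beta}$ estimate for the nonlinear term, removal of the cut-off via \eqref{eq:assMbis}, and the Gyongy--Krylov criterion on the product space. Your explicit reduction of uniqueness for the system \eqref{eq:PDEsystem} to scalar uniqueness (Theorem~\ref{Thm PDE}) plus a linear advection--diffusion equation with frozen velocity $u=K\ast\xi$ correctly fills in the step the paper merely asserts as ``not difficult.''
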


We do not repeat the full proof in this case but only sketch the main points.
The empirical measures satisfy%
\begin{align*}
\left\langle S_{t}^{N,+},\phi\right\rangle    =  \left\langle S_{0}^{N,+}%
,\phi\right\rangle &+\int_{0}^{t}\left\langle S_{s}^{N,+},F\left(  K\ast
V^{N}\ast S_{s}^{N,+}-K\ast V^{N}\ast S_{s}^{N,-}\right) \;\sca\; \nabla
\phi\right\rangle ds\\
&  +\nu\int_{0}^{t}\left\langle S_{s}^{N,+},\Delta\phi\right\rangle
ds+\frac{{\sqrt{2\nu}}}{N}\sum_{i=1}^{N}\int_{0}^{t}\nabla\phi(X_{s}^{i,N,+}%
) \; \sca\;dW_{s}^{i,+}%
\end{align*}%
and 
\begin{align*}
\left\langle S_{t}^{N,-},\phi\right\rangle    =  \left\langle S_{0}^{N,-}%
,\phi\right\rangle& +\int_{0}^{t}\left\langle S_{s}^{N,-},F\left(  K\ast
V^{N}\ast S_{s}^{N,+}-K\ast V^{N}\ast S_{s}^{N,-}\right) \;\sca\; \nabla
\phi\right\rangle ds\\
&  +\nu\int_{0}^{t}\left\langle S_{s}^{N,-},\Delta\phi\right\rangle
ds+\frac{{\sqrt{2\nu}}}{N}\sum_{i=1}^{N}\int_{0}^{t}\nabla\phi(X_{s}^{i,N,-}%
) \; \sca\;dW_{s}^{i,-}%
\end{align*}
and the empirical densities satisfy%
\begin{align*}
g_{t}^{N,+}(x)    =  g_{0}^{N,+}(x) & +\int_{0}^{t}\left\langle S_{s}%
^{N,+},F(K\ast g_{s}^{N,+}-K\ast g_{s}^{N,-})\;\sca\;\nabla V^{N}\left(
x-\cdot\right)  \right\rangle ds\\
&  +\nu\int_{0}^{t}\Delta g_{s}^{N,+}(x)ds+\frac{{\sqrt{2\nu}}}{N}\sum_{i=1}^{N}\int%
_{0}^{t}\nabla V^{N}\left(  x-X_{s}^{i,N,+}\right) \sca\;dW_{s}^{i,+}%
\end{align*}%
and 
\begin{align*}
g_{t}^{N,-}(x)    =  g_{0}^{N,-}(x)&+\int_{0}^{t}\left\langle S_{s}%
^{N,-},F(K\ast g_{s}^{N,+}-K\ast g_{s}^{N,-})\;\sca\;\nabla V^{N}\left(
x-\cdot\right)  \right\rangle ds\\
&  +\nu\int_{0}^{t}\Delta g_{s}^{N,-}(x)ds+\frac{{\sqrt{2\nu}}}{N}\sum_{i=1}^{N}\int%
_{0}^{t}\nabla V^{N}\left(  x-X_{s}^{i,N,-}\right)  \sca\;dW_{s}^{i,-}%
\end{align*}
which in mild form are%
\begin{align*}
g_{t}^{N,+}   =  e^{tA}{g}_{0}^{N,+}&+\int_{0}^{t}\nabla e^{\left(  t-s\right)
A}\left(  V^{N}\ast\left(  F(K\ast g_{s}^{N,+}-K\ast g_{s}^{N,-})S_{s}%
^{N,+}\right)  \right)  ds\\
&  +\frac{{\sqrt{2\nu}}}{N}\sum_{i=1}^{N}\int_{0}^{t}e^{\left(  t-s\right)  A}\left(
\nabla V^{N}\left(  \cdot-X_{s}^{i,N,+}\right)  \right) \sca\;dW_{s}^{i,+}%
\end{align*}%
and
\begin{align*}
g_{t}^{N,-}    =  e^{tA}{g}_{0}^{N,-}&+\int_{0}^{t}\nabla e^{\left(  t-s\right)
A}\left(  V^{N}\ast\left(  F(K\ast g_{s}^{N,+}-K\ast g_{s}^{N,-})S_{s}%
^{N,-}\right)  \right)  ds\\
&  +\frac{{\sqrt{2\nu}}}{N}\sum_{i=1}^{N}\int_{0}^{t}e^{\left(  t-s\right)  A}\left(
\nabla V^{N}\left(  \cdot-X_{s}^{i,N,-}\right)  \right)  \sca\;dW_{s}%
^{i,-}.
\end{align*}
The proof of the estimate%
\[
\mathbb{E}\left[  \left\Vert \left(  \mathrm{I}-A\right)  ^{\mm{\alpha}
/2}g_{t}^{N,+}\right\Vert _{\mathbb{L}^{\mm{p}}\left(  \mathbb{R}^{2}\right)  }%
^{\mm{q}}\right]  \leq C_{\mm{T,M,\nu,q}}%
\]
(and similarly for $g_{t}^{N,-}$) is similar to the case of a single sign (Proposition \ref{prop:bound1}): precisely, the estimates on%
\[
\left\Vert \left(  \mathrm{I}-A\right)  ^{\mm{\alpha}/2}e^{tA}g_{0}%
^{N,+}\right\Vert _{\mathbb{L}^{\mm{p}}\left(  \mathbb{R}^{2}\right)  }%
\]
and on
\[
\left\Vert \frac{1}{N}\sum_{i=1}^{N}\int_{0}^{t}\left(  \mathrm{I}-A\right)
^{\mm{\alpha}/2}\nabla e^{\left(  t-s\right)  A}\left(  V^{N}\left(  \cdot
-X_{s}^{i,N,+}\right)  \right)  dW_{s}^{i,+}\right\Vert _{\mathbb{L}%
^{\mm{p}}\left(  \mathbb{R}^{2}\right)  }%
\]
are obviously the same. But also the middle term can be studied in the same
way: one has%
\begin{multline*} \left\vert \left(  V^{N}\ast\left(  F\left(  K\ast g_{s}^{N,+}-K\ast
g_{s}^{N,-}\right)  S_{s}^{N,+}\right)  \right)  \left(  x\right)  \right\vert\vphantom{\Big(}
\\
 \leq\left\Vert F(K\ast g_{s}^{N,+}-K\ast g_{s}^{N,-})\right\Vert _{\infty
}\left\vert V^{N}\ast S_{s}^{N,+}\left(  x\right)  \right\vert \leq
M\;\left\vert g_{s}^{N,+}\left(  x\right)  \right\vert \vphantom{\Big(}
\end{multline*}
\mm{and the same conclusion follows, using Gronwall's Lemma.}
%\sout{which, by Lemma 16 in \cite{flan}, implies}
%\begin{multline*}
%\left\Vert \left(  \mathrm{I}-A\right)  ^{\alpha/2}e^{((t-s)/2)A}\left[
%V^{N}\ast\left(  F\left(  K\ast g_{s}^{N,+}-K\ast g_{s}^{N,-}\right)
%S_{s}^{N,+}\right)  \right]  \right\Vert _{\mathbb{L}^{p}\left(
%\Omega,H\right)  }\\
%\leq C_{M}\;\left\Vert e^{((t-s)/2)A}\left(  \mathrm{I}-A\right)
%^{\alpha/2}g_{s}^{N,+}\right\Vert _{\mathbb{L}^{p}\left(  \Omega,H\right)
%}\leq C_{M}\;\left\Vert \left(  \mathrm{I}-A\right)  ^{\alpha/2}g_{s}%
%^{N,+}\right\Vert _{\mathbb{L}^{p}\left(  \Omega,H\right)  }%
%\end{multline*}
%yielding to the same conclusion.

Moreover, in the proof of%
\[
\mathbb{E}\bigg[  \int_{0}^{T}\int_{0}^{T}\frac{\big\Vert g_{t}^{N,+}%
-g_{s}^{N,+}\big\Vert _{-2,2}^{\mm{q'}}}{|t-s|^{1+\mm{q'}\gamma}}\;ds\;dt\bigg]  \leq
C_{T,M,\nu,\mm{q'}}%
\]
(and similarly for $g_{t}^{N,-}$) the only part which \textit{a priori} may change is%
\begin{align*}
  \mathbb{E}\Big[  & \left\Vert \left\langle S_{r}^{N,+},F(K\ast g_{s}%
^{N,+}-K\ast g_{s}^{N,-})\ \nabla V^{N}\left(  x-\cdot\right)  \right\rangle
\right\Vert _{-2,2}^{\mm{q'}}\Big]\\
&  =\mathbb{E}\left[  \left\Vert \nabla(S_{r}^{N,+}F(K\ast g_{s}^{N,+}-K\ast
g_{s}^{N,-})\ast V^{N})\right\Vert _{-2,2}^{\mm{q'}}\right] \\
&  \leq\mathbb{E}\left[  \left\Vert S_{r}^{N,+}F(K\ast g_{s}^{N,+}-K\ast
g_{s}^{N,-})\ast V^{N}\right\Vert _{-1,2}^{\mm{q'}}\right] \\
&  \leq C_{M}\mathbb{E}\left[  \left\Vert g_{t}^{N}\right\Vert _{\LL^2}%
^{\mm{q'}}\right]  \leq C,
\end{align*}
so in fact this part remains the same. Then one can apply the same
arguments of tightness (see Section \ref{subsect compactness}).

In the passage to the limit (Section \ref{sec:pass}), the arguments are similar. We use the weak
formulation
\begin{align*}
\left\langle g_{t}^{N,+},\phi\right\rangle  &  =\left\langle g_{0}^{N,+}%
,\phi\right\rangle +\int_{0}^{t}\left\langle S_{s}^{N,+},F(K\ast g_{s}%
^{N,+}-K\ast g_{s}^{N,-})\;\sca\;\nabla(V^{N}\ast\phi)\right\rangle ds\\
&  \quad+\nu\int_{0}^{t}\left\langle g_{s}^{N,+},\Delta\phi\right\rangle
(x)ds+\frac{{\sqrt{2\nu}}}{N}\sum_{i=1}^{N}\int_{0}^{t}\nabla(V^{N}\ast\phi)\left(
X_{s}^{i,N,+}\right)  \;\sca\;dW_{s}^{i,+}%
\end{align*}
where $\phi$ is a smooth test function with compact support. The only
difficult step is proving that
\begin{multline*}
  \lim_{N\rightarrow\infty}\int_{0}^{t}\left\langle S_{s}^{N,+},F\big(K\ast
g_{s}^{N,+}-K\ast g_{s}^{N,-}\big) \;\sca\;\nabla(V^{N}\ast\phi)\right\rangle ds\\
  =\int_{0}^{t}\int_{\mathbb{R}^{2}}\xi^{+}\left(  s,x\right)  F\Big(K\ast\left(
\xi^{+}\left(  s,x\right)  -\xi^{-}\left(  s,x\right)  \right)  \Big) \;\sca\;\nabla\phi\left(  x\right)  ds.
\end{multline*}
The proof is analogous to the case of a single sign;\ let us recall the main
steps. After application of Skorohod's Theorem, due to the a.s.~convergence of
$g_{s}^{N,\pm}$ to $\xi^{\pm}$ in $\mathbb{L}_{w}^{2}\left(  [0,T]\; ;\;\mathbb{H}%
_{\mm{p}}^{\mm{\alpha}}\right)  $ and called $f_{s}^{N,\pm}:=K\ast g_{s}^{N,\pm}$,
thanks to the properties of the Biot-Savart operator we get that (a.s.)
$f_{s}^{N,\pm}$ converge to $K\ast\xi^{\pm}$ in $\mathbb{L}_{w}^{2}\left(
[0,T]\; ;\;\mathbb{L}^{p}\right)  $;\ and moreover $f_{s}^{N,\pm}$ are (a.s.)
bounded in $\mathbb{L}^{2}\left( [ 0,T]\; ;\;C^{\widetilde{\eta}}\right)  $. The
last property is used to prove that%
\begin{multline*}
\Big|F(f_{s}^{N,+}-f_{s}^{N,-})(x)\;\sca\;\nabla(V^{N}\ast\phi)(x)
\\
-\left(  F(f_{s}^{N,+}-f_{s}^{N,-})\;\sca\;\nabla(V^{N}\ast\phi)\right)  \ast
V^{N}(x)\Big|\leq C/N^{\widetilde{\eta}\beta}.
\end{multline*}
From this the other steps are easier and equal to the one-sign case.

\section{Uniform convergence}
\label{appB}

All past papers dealing with particle approximation of $2d$ Navier-Stokes
equations prove weak convergence of the empirical measures $S_{t}^{N}$ to the
probability law $\xi_{t}dx$. The novelty here is that we prove a stronger
convergence, namely the convergence in suitable function spaces of the
mollified empirical measure $g_{t}^{N}:=V^{N}\ast S_{t}^{N}$. Consider for
instance the property of uniform convergence in space on compact sets ($\LL^{2}$
in time). It is not possible to deduce this result from the weak convergence
$S_{t}^{N}\rightharpoonup\xi_{t}dx$ (see below). If one only knows that
$S_{t}^{N}\rightharpoonup\xi_{t}dx$, and one considers a classical kernel
smoothing algorithm $\theta_{\epsilon_{N}}\ast S_{t}^{N}$ to approximate the
profile $\xi_{t}$ by means of $S_{t}^{N}$, it is not clear how to choose
$\theta_{\epsilon_{N}}$ in such a way to have uniform convergence of
$\theta_{\epsilon_{N}}\ast S_{t}^{N}$ to $\xi_{t}$. The method described in
this paper indicates a strategy for a better particle approximation of
solutions to $2d$ Navier-Stokes equations.

Let us understand more closely the strength of the uniform convergence. It is
a strong indication that the particles are distributed quite uniformly in
space, they do not have too much concentration, aggregation. Let us make this
remark more quantitative.

\begin{proposition}
Assume  that the probability density $V$ has the property%
\[
h\mathbf{1}_{\left[  -r,r\right]  ^{2}}\leq V
\]
for some constants $h,r>0$. Assume that for some $\left(  t,\omega,R\right)  $
we have%
\[
\sup_{\left\vert x\right\vert \leq R}g_{t}^{N}\left(  x\right)  \leq C.
\]
Then%
\begin{equation}
\sup_{\left\vert x\right\vert \leq R}\mathrm{Card}\left\{  i=1,...,N \; ; \; X_{t}^{i,N}%
\in\left[  x-\frac{r}{N^{\beta}},x+\frac{r}{N^{\beta}}\right]  ^{2}\right\}
\leq\frac{C}{h}N^{1-2\beta}. \label{number of points}%
\end{equation}

\end{proposition}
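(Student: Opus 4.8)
The plan is to unfold the definition of the mollified empirical measure and exploit the assumed pointwise lower bound on $V$; no delicate analysis is required. First I would recall from \eqref{def g}, together with the scaling $V^N(z)=N^{2\beta}V(N^\beta z)$ of Assumption \ref{assump}, item \textit{1.}, that for every $x\in\RR^2$
\[
g_t^N(x)=\big(V^N\ast S_t^N\big)(x)=\frac{1}{N}\sum_{i=1}^N V^N\big(x-X_t^{i,N}\big)=\frac{N^{2\beta}}{N}\sum_{i=1}^N V\big(N^\beta(x-X_t^{i,N})\big).
\]

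Next I would fix $x$ with $|x|\leq R$ and discard, from the last sum, every index $i$ for which $X_t^{i,N}\notin[x-\frac{r}{N^\beta},x+\frac{r}{N^\beta}]^2$; since $V\geq 0$ everywhere, this can only decrease the right-hand side. For each surviving index the argument $N^\beta(x-X_t^{i,N})$ lies in $[-r,r]^2$, so the hypothesis $h\mathbf{1}_{[-r,r]^2}\leq V$ gives $V(N^\beta(x-X_t^{i,N}))\geq h$. Writing
\[
n_N(x):=\mathrm{Card}\Big\{i=1,\dots,N\;;\;X_t^{i,N}\in\big[x-\tfrac{r}{N^\beta},x+\tfrac{r}{N^\beta}\big]^2\Big\},
\]
this yields the pointwise lower bound $g_t^N(x)\geq hN^{2\beta-1}n_N(x)$.

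Finally I would invoke the standing hypothesis $\sup_{|x|\leq R}g_t^N(x)\leq C$. Solving the previous inequality for $n_N(x)$ gives $n_N(x)\leq \frac{C}{h}N^{1-2\beta}$ for every $x$ with $|x|\leq R$, and taking the supremum over such $x$ produces exactly \eqref{number of points}. The argument has no genuine obstacle; the only point deserving care is the bookkeeping of the two scaling factors, namely the $N^{2\beta}$ coming from the normalisation of the kernel $V^N$ and the $1/N$ coming from the empirical average, whose product fixes the exponent $1-2\beta$ in the final bound.
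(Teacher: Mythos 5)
Your proof is correct and coincides with the paper's own argument: the paper likewise rescales the lower bound to $hN^{2\beta}\mathbf{1}_{[-r/N^{\beta},r/N^{\beta}]^{2}}\leq V^{N}$ and deduces $h\,n_N(x)N^{2\beta-1}\leq g_{t}^{N}(x)$, from which \eqref{number of points} follows immediately under the hypothesis $\sup_{|x|\leq R}g_{t}^{N}(x)\leq C$. Your bookkeeping of the factors $N^{2\beta}$ and $1/N$ is exactly the computation the paper leaves implicit, so there is nothing to add.
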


\begin{proof}
The result simply follows from the inequalities%
\[
hN^{2\beta}\mathbf{1}_{\left[  -\frac{r}{N^{\beta}},\frac{r}{N^{\beta}}\right]  ^{2}%
}\leq V^{N}%
\]
and thus%
\[
h\frac{\mathrm{Card}\left\{  i=1,...,N:X_{t}^{i,N}\in\left[  x-\frac{r}{N^{\beta}%
},x+\frac{r}{N^{\beta}}\right]  ^{2}\right\}  }{N^{1-2\beta}}\leq g_{t}%
^{N}\left(  x\right)  .
\]
\end{proof}

Let us give a heuristic interpretation of the previous result.

First consider the case of points $X_{t}^{i,N}$ geometrically uniform in
$\left[  -\frac{1}{2},\frac{1}{2}\right]  ^{2}$:\ we consider a uniform grid
in $\left[  -\frac{1}{2},\frac{1}{2}\right]  ^{2}$ of side length $\frac
{1}{N^{1/2}}$, hence with (roughly)\ $N$ grid points, and we put one particle
per grid point. 

In a box $\left[  x-\frac{r}{N^{\beta}},x+\frac{r}{N^{\beta}%
}\right]  ^{2}\subset\left[  -\frac{1}{2},\frac{1}{2}\right]  ^{2}$ we have
(approximatively)\
\[
\left(  \frac{\frac{2r}{N^{\beta}}}{\frac{1}{N^{1/2}}}\right)  ^{2}%
=4r^{2}N^{1-2\beta}%
\]
points. This is exactly estimate (\ref{number of points}).

\bigskip

Now, let us break this uniformity. Divide $\left[  -\frac{1}{2},\frac{1}%
{2}\right]  ^{2}$ in two sets:  \[Q=\left[  0,\frac{1}{N^{1/2}}\right]  ^{2}, \qquad \text{and} \qquad Q^{c}:=\left[  -\frac{1}{2},\frac{1}{2}\right]  ^{2}\backslash Q.\] Call
$X_{t}^{i}$ the first $N-N^{1-\beta}$ particles, $\widetilde{X}_{t}^{i}$ the
last $N^{1-\beta}$ ones. Put the $N-N^{1-\beta}$ particles $X_{t}^{i}$ in the
grid points of $Q^{c}$, no more than one particle per grid point. Put the
$N^{1-\beta}$ particles $\widetilde{X}_{t}^{i}$ in $Q_{0}=\left[  0,\frac
{r}{N^{\beta}}\right]  ^{2}$, which is contained in $Q$ (for large $N$), being
$\beta<\frac{1}{4}$. 

Then, all cubes $Q_{x}:=\left[  x-\frac{r}{N^{\beta}},x+\frac
{r}{N^{\beta}}\right]  ^{2} \subset Q^{c}$ contain, as above, at most
$4r^{2}N^{1-2\beta}$ particles. But $Q_{0}$ contains $N^{1-\beta}$ particles,
much more than $4r^{2}N^{1-2\beta}$ for large $N$. This is an example of
configuration which does not fulfill estimate (\ref{number of points}). For
such configuration, however, we still have%
\[
\left\langle S_{t}^{N},\phi\right\rangle \rightarrow\left\langle \xi_{t}%
,\phi\right\rangle
\]
for all continuous bounded $\phi$. Let us only check that this is true for
$\phi=\mathbf{1}_{\mathcal{O}}$ when $\mathcal{O}$ is an open set (it is not continuous but it is a good
heuristic indication). We have
\begin{align*}
\left\langle S_{t}^{N},\phi\right\rangle  & =\left\langle S_{t}^{N},\mathbf{1}_{\mathcal{O}\cap
Q}\right\rangle +\left\langle S_{t}^{N},\mathbf{1}_{\mathcal{O}\cap Q^{c}}\right\rangle
\leq\left\langle S_{t}^{N},\mathbf{1}_{Q}\right\rangle +\left\langle S_{t}^{N},\mathbf{1}_{\mathcal{O}\cap
Q^{c}}\right\rangle \\
& \sim\frac{1}{N}N^{1-\beta}+\left\vert \mathcal{O}\right\vert \xrightarrow[{N\rightarrow
\infty}]{}\left\vert \mathcal{O}\right\vert =\left\langle \xi_{t}%
,\phi\right\rangle
\end{align*}
where $\xi_{t}=\mathbf{1}$. The excessive concentration in $Q_{0}$ does not prevent
weak convergence but it is not allowed by the stronger convergence proved here.

\bibliographystyle{siamplain}
\bibliography{references}

\begin{thebibliography}{10}

\bibitem{BM2}
{\sc J.~T. Beale and A.~Majda}, {\em Vortex methods. {I}. {C}onvergence in
  three dimensions}, Math. Comp., 39 (1982), pp.~1--27,
  \url{https://doi.org/10.2307/2007617}, \url{https://doi.org/10.2307/2007617}.

\bibitem{BM1}
{\sc J.~T. Beale and A.~Majda}, {\em Vortex methods. {II}. {H}igher order
  accuracy in two and three dimensions}, Math. Comp., 39 (1982), pp.~29--52,
  \url{https://doi.org/10.2307/2007618}, \url{https://doi.org/10.2307/2007618}.

\bibitem{BC}
{\sc F.~Bechtold and F.~Coppini}, {\em A law of large numbers for interacting
  diffusions via a mild formulation}, Arxiv preprint: 2005.05624,  (2020),
  \url{https://arxiv.org/abs/2005.05624}.

\bibitem{Ben}
{\sc M.~Ben-Artzi}, {\em Global solutions of two-dimensional {N}avier-{S}tokes
  and {E}uler equations}, Arch. Rational Mech. Anal., 128 (1994), pp.~329--358,
  \url{https://doi.org/10.1007/BF00387712},
  \url{https://doi.org/10.1007/BF00387712}.

\bibitem{Chorin}
{\sc A.~J. Chorin}, {\em Numerical study of slightly viscous flow}, J. Fluid
  Mech., 57 (1973), pp.~785--796,
  \url{https://doi.org/10.1017/S0022112073002016},
  \url{https://doi.org/10.1017/S0022112073002016}.

\bibitem{Chorin1}
{\sc A.~J. Chorin}, {\em Vorticity and turbulence}, vol.~103 of Applied
  Mathematical Sciences, Springer-Verlag, New York, 1994,
  \url{https://doi.org/10.1007/978-1-4419-8728-0},
  \url{https://doi.org/10.1007/978-1-4419-8728-0}.

\bibitem{flan}
{\sc F.~Flandoli, M.~Leimbach, and C.~Olivera}, {\em Uniform convergence of
  proliferating particles to the {FKPP} equation}, J. Math. Anal. Appl., 473
  (2019), pp.~27--52, \url{https://doi.org/10.1016/j.jmaa.2018.12.013},
  \url{https://doi.org/10.1016/j.jmaa.2018.12.013}.

\bibitem{flan3}
{\sc F.~Flandoli and M.~Leocata}, {\em A particle system approach to
  aggregation phenomena}, J. Appl. Probab., 56 (2019), pp.~282--306,
  \url{https://doi.org/10.1017/jpr.2019.18},
  \url{https://doi.org/10.1017/jpr.2019.18}.

\bibitem{flan4}
{\sc F.~Flandoli, M.~Leocata, and C.~Ricci}, {\em The
  {V}lasov-{N}avier-{S}tokes equations as a mean field limit}, Discrete Contin.
  Dyn. Syst. Ser. B, 24 (2019), pp.~3741--3753,
  \url{https://doi.org/10.3934/dcdsb.2018313},
  \url{https://doi.org/10.3934/dcdsb.2018313}.

\bibitem{Fontona}
{\sc J.~Fontbona}, {\em A probabilistic interpretation and stochastic particle
  approximations of the 3-dimensional {N}avier-{S}tokes equations}, Probab.
  Theory Related Fields, 136 (2006), pp.~102--156,
  \url{https://doi.org/10.1007/s00440-005-0477-9},
  \url{https://doi.org/10.1007/s00440-005-0477-9}.

\bibitem{Fournier}
{\sc N.~Fournier, M.~Hauray, and S.~Mischler}, {\em Propagation of chaos for
  the 2{D} viscous vortex model}, J. Eur. Math. Soc. (JEMS), 16 (2014),
  pp.~1423--1466, \url{https://doi.org/10.4171/JEMS/465},
  \url{https://doi.org/10.4171/JEMS/465}.

\bibitem{Gallay}
{\sc I.~Gallagher and T.~Gallay}, {\em Uniqueness for the two-dimensional
  {N}avier-{S}tokes equation with a measure as initial vorticity}, Math. Ann.,
  332 (2005), pp.~287--327, \url{https://doi.org/10.1007/s00208-004-0627-x},
  \url{https://doi.org/10.1007/s00208-004-0627-x}.

\bibitem{Giga}
{\sc Y.~Giga, T.~Miyakawa, and H.~Osada}, {\em Two-dimensional
  {N}avier-{S}tokes flow with measures as initial vorticity}, Arch. Rational
  Mech. Anal., 104 (1988), pp.~223--250,
  \url{https://doi.org/10.1007/BF00281355},
  \url{https://doi.org/10.1007/BF00281355}.

\bibitem{Good}
{\sc J.~Goodman}, {\em Convergence of the random vortex method}, Comm. Pure
  Appl. Math., 40 (1987), pp.~189--220,
  \url{https://doi.org/10.1002/cpa.3160400204},
  \url{https://doi.org/10.1002/cpa.3160400204}.

\bibitem{gyon}
{\sc I.~Gy\"{o}ngy and N.~Krylov}, {\em Existence of strong solutions for
  {I}t\^{o}'s stochastic equations via approximations}, Probab. Theory Related
  Fields, 105 (1996), pp.~143--158, \url{https://doi.org/10.1007/BF01203833},
  \url{https://doi.org/10.1007/BF01203833}.

\bibitem{Ikeda}
{\sc N.~Ikeda and S.~Watanabe}, {\em Stochastic differential equations and
  diffusion processes}, vol.~24 of North-Holland Mathematical Library,
  North-Holland Publishing Co., Amsterdam-New York; Kodansha, Ltd., Tokyo,
  1981.

\bibitem{Jabin}
{\sc P.-E. Jabin and Z.~Wang}, {\em Quantitative estimates of propagation of
  chaos for stochastic systems with {$W^{-1,\infty}$} kernels}, Invent. Math.,
  214 (2018), pp.~523--591, \url{https://doi.org/10.1007/s00222-018-0808-y},
  \url{https://doi.org/10.1007/s00222-018-0808-y}.

\bibitem{Mela}
{\sc B.~Jourdain and S.~M\'{e}l\'{e}ard}, {\em Propagation of chaos and
  fluctuations for a moderate model with smooth initial data}, Ann. Inst. H.
  Poincar\'{e} Probab. Statist., 34 (1998), pp.~727--766,
  \url{https://doi.org/10.1016/S0246-0203(99)80002-8},
  \url{https://doi.org/10.1016/S0246-0203(99)80002-8}.

\bibitem{Kato}
{\sc T.~Kato}, {\em The {N}avier-{S}tokes equation for an incompressible fluid
  in {${\bf R}^2$} with a measure as the initial vorticity}, Differential
  Integral Equations, 7 (1994), pp.~949--966.

\bibitem{Lema}
{\sc P.~G. Lemari\'{e}-Rieusset}, {\em Recent developments in the
  {N}avier-{S}tokes problem}, vol.~431 of Chapman \& Hall/CRC Research Notes in
  Mathematics, Chapman \& Hall/CRC, Boca Raton, FL, 2002,
  \url{https://doi.org/10.1201/9781420035674},
  \url{https://doi.org/10.1201/9781420035674}.

\bibitem{Long}
{\sc D.-G. Long}, {\em Convergence of the random vortex method in two
  dimensions}, J. Amer. Math. Soc., 1 (1988), pp.~779--804,
  \url{https://doi.org/10.2307/1990992}, \url{https://doi.org/10.2307/1990992}.

\bibitem{Marchiolo}
{\sc C.~Marchioro and M.~Pulvirenti}, {\em Hydrodynamics in two dimensions and
  vortex theory}, Comm. Math. Phys., 84 (1982), pp.~483--503,
  \url{http://projecteuclid.org/euclid.cmp/1103921285}.

\bibitem{Mela2}
{\sc S.~M\'{e}l\'{e}ard}, {\em A trajectorial proof of the vortex method for
  the two-dimensional {N}avier-{S}tokes equation}, Ann. Appl. Probab., 10
  (2000), pp.~1197--1211, \url{https://doi.org/10.1214/aoap/1019487613},
  \url{https://doi.org/10.1214/aoap/1019487613}.

\bibitem{Mela3}
{\sc S.~M\'{e}l\'{e}ard}, {\em Monte-{C}arlo approximations for 2d
  {N}avier-{S}tokes equations with measure initial data}, Probab. Theory
  Related Fields, 121 (2001), pp.~367--388,
  \url{https://doi.org/10.1007/s004400100154},
  \url{https://doi.org/10.1007/s004400100154}.

\bibitem{Oel1}
{\sc K.~Oelschl\"{a}ger}, {\em A law of large numbers for moderately
  interacting diffusion processes}, Z. Wahrsch. Verw. Gebiete, 69 (1985),
  pp.~279--322, \url{https://doi.org/10.1007/BF02450284},
  \url{https://doi.org/10.1007/BF02450284}.

\bibitem{Osa}
{\sc H.~Osada}, {\em Propagation of chaos for the two-dimensional
  {N}avier-{S}tokes equation}, in Probabilistic methods in mathematical physics
  ({K}atata/{K}yoto, 1985), Academic Press, Boston, MA, 1987, pp.~303--334.

\bibitem{Pazy}
{\sc A.~Pazy}, {\em Semigroups of linear operators and applications to partial
  differential equations}, vol.~44 of Applied Mathematical Sciences,
  Springer-Verlag, New York, 1983,
  \url{https://doi.org/10.1007/978-1-4612-5561-1},
  \url{https://doi.org/10.1007/978-1-4612-5561-1}.

\bibitem{Simon}
{\sc J.~Simon}, {\em Compact sets in the space {$L^p(0,T;B)$}}, Ann. Mat. Pura
  Appl. (4), 146 (1987), pp.~65--96, \url{https://doi.org/10.1007/BF01762360},
  \url{https://doi.org/10.1007/BF01762360}.

\bibitem{Simon-O}
{\sc M.~Simon and C.~Olivera}, {\em Non-local conservation law from stochastic
  particle systems}, J. Dynam. Differential Equations, 30 (2018),
  pp.~1661--1682, \url{https://doi.org/10.1007/s10884-017-9620-4},
  \url{https://doi.org/10.1007/s10884-017-9620-4}.

\bibitem{Soh}
{\sc H.~Sohr}, {\em The {N}avier-{S}tokes equations}, Modern Birkh\"{a}user
  Classics, Birkh\"{a}user/Springer Basel AG, Basel, 2001.
\newblock An elementary functional analytic approach, [2013 reprint of the 2001
  original] [MR1928881].

\bibitem{Tema}
{\sc R.~Temam}, {\em Navier-{S}tokes equations and nonlinear functional
  analysis}, vol.~66 of CBMS-NSF Regional Conference Series in Applied
  Mathematics, Society for Industrial and Applied Mathematics (SIAM),
  Philadelphia, PA, second~ed., 1995,
  \url{https://doi.org/10.1137/1.9781611970050},
  \url{https://doi.org/10.1137/1.9781611970050}.

\bibitem{Triebel}
{\sc H.~Triebel}, {\em Interpolation theory, function spaces, differential
  operators}, vol.~18 of North-Holland Mathematical Library, North-Holland
  Publishing Co., Amsterdam-New York, 1978.

\bibitem{vanNeVerWeis}
{\sc J.~M. A.~M. van Neerven, M.~C. Veraar, and L.~Weis}, {\em Stochastic
  integration in {UMD} {B}anach spaces}, Ann. Probab., 35 (2007),
  pp.~1438--1478, \url{https://doi.org/10.1214/009117906000001006},
  \url{https://doi.org/10.1214/009117906000001006}.

\end{thebibliography}
\end{document}